\theoremstyle{plain}
\newtheorem{thm}{Theorem}[section]
\newtheorem{prop}[thm]{Proposition}
\newtheorem{cor}[thm]{Corollary}
\newtheorem{lem}[thm]{Lemma}
\theoremstyle{definition}
\def\Ker{\mathop{\mathrm{Ker}}\nolimits}
\def\Hom{\mathop{\mathrm{Hom}}\nolimits}
\newcommand{\SK}{{S^3 \setminus K }}
\newcommand{\lra}{\longrightarrow}
\newcommand{\ra}{\rightarrow}
\newcommand{\Q}{{\Bbb Q}}
\newcommand{\Z}{{\Bbb Z}}
\newcommand{\pc}[2]{\mbox{$\begin{array}{c}
\includegraphics[scale=#2]{#1.eps}
\end{array}$}}
\begin{document}
\large
\begin{center}
{\bf\Large Some comparisons of Blanchfield pairings and cohomology pairings of knots}
\end{center}
\begin{center}{Takefumi Nosaka\footnote{
E-mail address: {\tt nosaka@math.titech.ac.jp}
}}\end{center}
\begin{abstract}\baselineskip=12pt \noindent
We study
some comparison between a bilinear cohomology pairing in local coefficients and
the Blanchfield pairing of a knot.
We show that the former pairing is an $S$-equivalent invariant, 
and give a criterion to a relation between the two pairings.
We also observe
that the pairings of some knots are equivalent,
and that the pairings of other knots are not equivalent.

\end{abstract}

\begin{center}
\normalsize
\baselineskip=11pt
{\bf Keywords} \\
\ \ \ Cup product, knot, Blanchfield pairing, infinite covering, quandle \ \ \
\end{center}

\large
\baselineskip=16pt

\section{Introduction}
\subsection{Motivation and background}\label{jiji1}
The interaction between cup products and intersection forms is a basic method to essentially analyse a $C^{\infty}$-manifold $Y$ in the history (e.g., the classification theorem of simply connected manifolds).
As a typical example, as seen in the Poincar\'{e} duality with trivial coefficients,
non-degeneracy of the intersection form can be shown from the view of the cup product.
Here, it is worth noting that the interactions are implicitly reflected on algebraic futures of the coefficients $\Z$ and $\Z/p$.

However, once the (co)homology groups are investigated with local coefficients $A$,
such an interaction has many unknown aspects with ambiguity and differences.
Actually, graded commutativity and the Krull dimension of $A$ appear as obstructions:
for example, we can perceive such a difference even from some dualities of infinite cyclic covering spaces $\widetilde{Y}$ of closed manifolds;
while Blanchfield duality \cite{Bla} on the homology of $\widetilde{Y}$ is defined from some Bockstein operator to realize hermitian intersection forms over $\Z[t^{\pm 1}]$,
Milnor duality \cite{Mil} is anti-hermitianly constructed with regard to the cup products of $\widetilde{Y}$ over fields and requires some assumptions;
moreover, there are not so many descriptions to explicitly connect the two dualities (cf. Theorem \ref{clAl22}; however, partial connections on signatures can be seen in \cite{Kea2,MP}).


\subsection{Settings: cohomology pairings and the Blanchfield duality}\label{jiji2}
In this paper, we focus on the case $Y$ is a complement $S^3 \setminus K$ of a knot $K$, and
study a relation between cohomology pairings and the Blanchfield duality.
The former pairing
is constructed from the abelianization $\mathrm{Ab} :\pi_1(Y) \ra \Z= \langle t \rangle$, as follows.
Choosing a Seifert surface $\Sigma$, we regard it as a relative homology $2$-class in $ H_2 ( Y ,\partial Y ;\Z )$.
Set up $\Z[t^{\pm 1}]$-modules $M$ and $M'$, and a sesquilinear\footnote{A bilinear map $\psi : M \times M' \ra A$ over $\Z$ is said to be {\it sesquilinear},
if 
$ \psi(t x ,y )= t \psi(x , y ) = \psi(x ,t^{-1} y )$ holds for any $x\in M,y \in M'$. } 
bilinear function $ \psi : M \times M' \ra A $ for some $\Z[t^{\pm 1}]$-module $A$.
Then, we can define the pairing as a bilinear form
\begin{equation}\label{kiso}
\mathcal {Q}_{\psi}:
H^1( Y,\partial Y ; M ) \otimes H^1( Y,\partial Y ; M ') \xrightarrow{\ \smile \ } H^2( Y ,\partial Y ; M \otimes M' ) \xrightarrow{ \ \bullet \cap \Sigma \ } M \otimes M' \xrightarrow{\ \psi \ }A
. \end{equation}
Here we regard $M$ and $M'$ as the local coefficient modules of $Y$ via $\mathrm{Ab}$, and
the first map $\smile$ is the cup-product, and the second is the cap-product with $ \Sigma $.
Though this pairing $\mathcal {Q}_{\psi}$ seems speculative and uncomputable from definitions,
the author \cite{Nos5} has developed a diagrammatic computation of the $\mathcal {Q}_{\psi}$ (see also \S \ref{Ss3}). 
%

On the other hand, we roughly review the Blanchfield pairing \cite{Bla}
of a knot $K$ with Alexander polynomial $\Delta$.
The first homology $ H_1( S^3 \setminus K ;\Z[t^{\pm 1}] ) $ with local coefficients is called {\it the Alexander module}, i.e.,
the first homology of the covering space $\tilde{Y}$.
Then, from the view of intersection forms in $\tilde{Y}$, the Blanchfield pairing is defined as a sesquilinear form
\begin{equation}\label{blaeq221} \mathrm{Bl}_K: H_1( S^3 \setminus K ;\Z[t^{\pm 1}] )^{\otimes 2} \lra \Z[t^{\pm 1}]/(\Delta) ,
\end{equation}
(\S \ref{99822} reviews the formulation \footnote{In many cases (see \cite{Kaw,T2,Hil}), the image is described as the
injective module $\Q(t)/ \Z[t^{\pm 1}] $. However, such as \cite{T},
the pairing factors through the inclusion $ \Z[t^{\pm 1}]/(\Delta) \hookrightarrow \Q(t)/ \Z[t^{\pm 1}] $ that sends $[f]$ to $[f/ \Delta] $.}).
This $ \mathrm{Bl}_K $ is known to be non-singular, hermitian and sesquilinear (see \cite{Bla,Kaw,Hil});
further, it is a complete invariant of ``the $S$-equivalences" in knots (see \cite{T2,NS} for details). More precisely, two knots $K $ and $K'$ are $S$-equivalent if and only if
the associated pairings $ \mathrm{Bl}_K $ and $ \mathrm{Bl}_{K'} $ are isomorphic as a bilinear form.

\subsection{Main results}\label{jiji3}
Thus, it is natural to ask 
whether $\mathcal{Q}_{\psi } $ is invariant under $S$-equivalence. The main result is as follows:
\begin{thm}\label{cl022}
If two knots $K $ and $K'$ are $S$-equivalent, then the cohomology pairings $\mathcal{Q}_{\psi } $ and $\mathcal{Q}_{\psi }' $ are equal up to bilinear isomorphisms.
\end{thm}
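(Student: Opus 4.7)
The plan is to reduce the invariance claim to the level of Seifert matrices by means of the diagrammatic description of $\mathcal{Q}_{\psi}$ developed in \cite{Nos5} (recalled in \S \ref{Ss3}), and then to check invariance under the two generating moves of $S$-equivalence on Seifert matrices.

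First, I would make explicit how $\mathcal{Q}_{\psi}$ is encoded by a Seifert surface $\Sigma$. The cap product with $[\Sigma]\in H_2(Y,\partial Y;\Z)$ appearing in \eqref{kiso} can be unwound at the chain level by choosing a relative cell decomposition of $Y=S^3\setminus K$ built from the bicollar $\Sigma\times[-1,1]$ and its complement. This presents $H^1(Y,\partial Y;M)$ as a cokernel of a matrix built from the Seifert matrix $V$ and its transpose (the same data that yields the presentation matrix $tV-V^T$ of the Alexander module), and displays $\mathcal{Q}_{\psi}$ as an explicit bilinear form built from $V$, $V^T$ and $\psi$. The goal of this step is to express $\mathcal{Q}_{\psi}$ as a bilinear form attached to the pair $(V,\psi)$, independently of the chosen Seifert surface.

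Next, I would invoke Trotter's classification of $S$-equivalence, which is generated by the moves
\begin{enumerate}[(i)]
\item congruence $V \mapsto P V P^{T}$ with $P\in GL_{2g}(\Z)$, and
\item elementary enlargement $V\mapsto V\oplus \bigl(\begin{smallmatrix}0 & 1\\ 0 & 0\end{smallmatrix}\bigr)$ (and its transpose variant)
\end{enumerate}
on Seifert matrices. Under (i) the cohomology groups produced in Step~1 transform by the base change induced by $P$, and the formula for $\mathcal{Q}_{\psi}$ transforms covariantly, yielding a tautological bilinear isomorphism. Under (ii) the $t$-twisted presentation of the added block equals $\bigl(\begin{smallmatrix}0 & t\\ -1 & 0\end{smallmatrix}\bigr)$, whose determinant is the unit $t\in \Z[t^{\pm 1}]$; hence the added summand contributes trivially to the cokernel and, after tensoring with $M$ and $M'$, furnishes a canonical split bilinear isomorphism between $\mathcal{Q}_{\psi}$ computed from $V$ and from the enlargement.

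The main obstacle I anticipate is the rigorous execution of Step~1: translating \eqref{kiso} into a purely linear-algebraic formula valid for arbitrary local coefficients $M,M'$ (possibly with $\Z[t^{\pm1}]$-torsion), while keeping track of the sesquilinear behaviour under both the cup and cap products. The cleanest route is probably to lift the relative handle decomposition adapted to $\Sigma$ to the infinite cyclic cover $\widetilde{Y}$, so that the cup--cap composition becomes an intersection pairing of relative $1$-chains paired against $\Sigma$; once this chain-level picture is in place the Seifert matrix enters naturally and both moves (i), (ii) are reduced to mechanical verifications. A secondary subtlety is to ensure that the formula in Step~1 is independent of the auxiliary choices in the cell decomposition, which is needed before it is legitimate to read off invariance from the matrix moves alone.
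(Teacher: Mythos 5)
Your strategy---reduce $\mathcal{Q}_{\psi}$ to a formula in the Seifert matrix and then check Trotter's algebraic generators of $S$-equivalence---is a legitimate alternative route, but as written it has two concrete gaps. The first is that the entire technical content is deferred to your Step~1, which you acknowledge but do not carry out: an explicit chain-level formula for \eqref{kiso} in terms of $(V,\psi)$ valid for arbitrary $\Z[t^{\pm1}]$-modules $M,M'$. This is not a routine unwinding. The pairing \eqref{kiso} is a cup product $H^1\otimes H^1\to H^2$ followed by a cap with the Seifert class $\Sigma$, whereas the chain-level cup-product formula available from Trotter's cell complex (the one this paper does use, in Section \ref{99822}) is for $H^1\otimes H^2\to H^3$ evaluated against the relative fundamental class; you would need to derive, and prove correct, a different cochain formula before the ``mechanical verifications'' of moves (i) and (ii) can even begin. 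The second gap is that move (ii) is misstated: $S$-equivalence of Seifert matrices is generated by congruence together with \emph{general} elementary enlargements, which adjoin an arbitrary extra integer column $\xi$ and diagonal entry, not merely the direct sum with $\left(\begin{smallmatrix}0&1\\0&0\end{smallmatrix}\right)$; the special case $\xi=0$ does not generate. One can still clear the enlarged block of $tV_1-V_1^{T}$ by row and column operations over $\Lambda$ using its unit entries $t$ and $-1$, but this must be checked compatibly with the (missing) pairing formula, not only at the level of presented modules.

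For contrast, the paper's proof avoids Seifert matrices entirely. It adopts the Naik--Stanford definition of $S$-equivalence as generated by double delta moves on diagrams, transports an $X$-coloring across the move by keeping the boundary arcs fixed, and verifies via the local computation of Lemma \ref{clAl293} that the weight sum \eqref{aaa} is unchanged; Theorem \ref{mainthm1} then translates this into invariance of the cohomology pairing \eqref{kiso}. That argument is short precisely because the diagrammatic model of $\mathcal{Q}_{\psi}$ is already established in \cite{Nos5}, so no chain-level model of the cup and cap products has to be built. If you wish to pursue your route, the honest cost is proving your Step~1 in full generality, which is comparable in difficulty to the cited diagrammatic theorem you would be replacing.
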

\noindent
We put the proof in Section \ref{99822}.
In conclusion, this theorem implies that the cohomology pairing $\mathcal{Q}_{\psi } $
can be described from the $ \mathrm{Bl}_K $
in principle.

The main purpose of this paper is a study of such a description.
For this, 
we point out that it is reasonable to suppose $M= \Lambda/(\Delta )$, since
there is a $\Z[t^{\pm 1}]$-module isomorphism
\begin{equation}\label{oo00} \kappa : H_1( S^3 \setminus K ;\Z[t^{\pm 1}] ) \cong H^1(S^3 \setminus K, \partial (S^3 \setminus K );M ), 
\end{equation}
which is explicitly described in Section \ref{99822}.
Let $\bar{} :  \Z[t^{\pm 1}]  \ra \Z[t^{\pm 1}]  $ be the involution defined by $ \bar{t}=t^{-1}$
The following theorem asserts that a constant multiple of the Blanchfield pairing 
can be recovered from some bilinear form $ \mathcal{Q}_{\psi}$ in some cases (we put the proof in \S \ref{99822}):
\begin{thm}\label{clAl22}
Let $K$ be a knot with Alexander polynomial $\Delta$, and let $M$ be the quotient module $ \Z[t^{\pm 1}]/ ( \Delta) .$
Define $ \psi_0: M \otimes M \ra \Z[t^{\pm 1}]/ ( \Delta) $ by $ \psi_0 (x,y)=\bar{x}y . $

Then, there is a constant $\alpha_K \in \Z[t^{\pm 1}]/ ( \Delta)$ with 
$ \overline{\alpha_K}=\alpha_K $
such that 
the following equality holds as bilinear forms:
$$ \mathcal{Q}_{\psi_0 } (\kappa(x) ,\kappa(y))= \alpha_K \frac{1+t}{1-t} \cdot \mathrm{Bl}_K(x,y) \in \Z[t^{\pm 1}]/ ( \Delta) , $$
for any $x,y \in H_1(S^3 \setminus K ;\Z[t^{\pm 1}] ) $. Here $\kappa $ is written in \eqref{oo00}.
\end{thm}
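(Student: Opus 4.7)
The plan is to compute both bilinear forms explicitly in terms of a Seifert matrix and then match them. Fix a Seifert surface $\Sigma$ of genus $g$ for $K$, with Seifert matrix $V \in M_{2g}(\Z)$, so that the Alexander module $H_1(S^3 \setminus K; \Z[t^{\pm 1}])$ is presented by $tV - V^T$ and the Alexander polynomial is $\det(tV - V^T)$ up to a unit. Using the description of $\kappa$ to be given in Section \ref{99822}, elements of $H^1(S^3 \setminus K, \partial; M)$ are identified with column vectors in $\Z[t^{\pm 1}]^{2g}$ modulo the image of $tV - V^T$, so the theorem reduces to comparing two sesquilinear forms on $\Z[t^{\pm 1}]^{2g}$ with values in $M$.

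For the right-hand side, I would use the standard Seifert-matrix formula expressing the Blanchfield pairing as $\mathrm{Bl}_K(x, y) \equiv (1-t)\,\bar{x}^T (V - tV^T)^{-1} y$ modulo $\Z[t^{\pm 1}]$, lifted to $\Z[t^{\pm 1}]/(\Delta)$ via the adjugate of $V - tV^T$. For the left-hand side, I would invoke the diagrammatic computation of $\mathcal{Q}_\psi$ developed in \cite{Nos5}. Because the cap product is taken against the relative class of $\Sigma$, and because the two sides of $\Sigma$ are interchanged by the deck transformation $t$ in the infinite cyclic cover, the combinatorial formula produces an expression in $V$ and $V^T$ weighted by $t$ and $1$ rather than $t$ and $-1$, thereby yielding a factor $(1+t)$ in place of the Blanchfield $(1-t)$. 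Since $\Delta(1) = \pm 1$, the element $1-t$ is invertible in $\Z[t^{\pm 1}]/(\Delta)$, so $(1+t)/(1-t)$ is well defined, and any remaining scalar discrepancy between the two normalizations is absorbed into the constant $\alpha_K$.

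The hermitian property $\overline{\alpha_K} = \alpha_K$ then drops out of a symmetry comparison: $\mathrm{Bl}_K$ is hermitian, whereas $\mathcal{Q}_{\psi_0}$ is anti-hermitian (combining graded commutativity of the cup product on $1$-cochains with the identity $\overline{\psi_0(y,x)} = \psi_0(x,y)$), and $\overline{(1+t)/(1-t)} = -(1+t)/(1-t)$, so the ratio $\alpha_K$ must be hermitian.

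The main obstacle will be the explicit Seifert-matrix evaluation of $\mathcal{Q}_{\psi_0}$ through the diagrammatic framework of \cite{Nos5}, where careful tracking of the $t$-powers arising from the bicollar neighborhood of $\Sigma$ is required in order to isolate exactly the factor $(1+t)$. Once that calculation is carried out, the matching with the Blanchfield formula both forces the appearance of the ratio $(1+t)/(1-t)$ and pins down the precise scalar $\alpha_K$.
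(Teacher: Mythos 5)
Your outline identifies the two superficial features of the statement (the $(1+t)/(1-t)$ factor and the hermitian symmetry of $\alpha_K$), and your symmetry argument for $\overline{\alpha_K}=\alpha_K$ is essentially the one the paper uses. But the core of the proposal has a genuine gap: you assert that after computing both pairings in Seifert-matrix form, ``any remaining scalar discrepancy between the two normalizations is absorbed into the constant $\alpha_K$.'' That a single scalar $\alpha_K \in \Lambda/(\Delta)$ suffices is precisely the nontrivial content of the theorem, not something that can be absorbed by fiat: two sesquilinear forms on $\Lambda^{2g}/(tV-V')\Lambda^{2g}$ presented by matrices $A(t)$ and $B(t)$ need not differ by a scalar multiple, and the tables in Sections 4--5 (where $\alpha_K$ is frequently a non-unit, sometimes even zero) show that the diagrammatic computation of $\mathcal{Q}_{\psi_0}$ does \emph{not} simply return $(1+t)$ times the Blanchfield matrix. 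So even if you carried out the Seifert-matrix evaluation you defer as ``the main obstacle,'' you would still need an argument for why the answer has the form $\alpha_K(1+t)(tV-V')^{-1}$ for a scalar $\alpha_K$.

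The paper supplies exactly this missing argument, by a route quite different from a direct diagrammatic computation. It introduces an auxiliary pairing $\Upsilon$ built from the Bockstein map $\gamma$ of the sequence \eqref{oo3}, and proves two separate facts: (i) Lemma \ref{bb1l8}, a Leibniz-rule computation ($\gamma(u\smile v)=\nu_*(\alpha(u)\smile v - u\smile\beta(v))$, following Samelson) combined with the identity $\psi_r(u,(tV-V')\mathfrak{s}(v))=\psi_l(\overline{(tV-V')'}\mathfrak{s}(u),v)$, which is where the factor $(1+t)/(1-t)$ actually arises --- not from the geometry of the bicollar of $\Sigma$ as you suggest; and (ii) Lemma \ref{bb1l98}, which shows that both $\Phi$ (the map underlying $\mathcal{Q}_{\psi_0}$) and $\Upsilon$ factor through $H^2(E_K,\partial E_K;\Lambda^{\rm op}/(\Delta)\otimes_\Lambda\Lambda/(\Delta))\cong\Lambda/(\Delta)$, a cyclic module, so each is multiplication by a constant and hence $\Phi=\alpha_K\Upsilon$ with $\alpha_K=P^{-1}Q$ (using non-singularity of the Blanchfield pairing to invert $P$). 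That factorization through a rank-one target is the key idea your proposal lacks, and without it the existence of the constant $\alpha_K$ does not follow.
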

In summary, it is fair to state that, this theorem 
gives a cohomological approach to $\mathrm{Bl}_K$ in the sense \ref{blaeq221}, and an obstruction $\alpha_K$ from the approach, in contrast with the previous works \cite{Bla,FP,MP,Kea2} as homological approach. 
However, it is a future problem to ask a relation between the Milnor pairing and our pairing $ \mathcal{Q}_{\psi }$.

We give some remarks on this theorem:
We note that $1-t$ is invertible in $ \Z[t^{\pm 1}]/ ( \Delta)  $ because of $\Delta(1)=\pm 1 $.
Furthermore, the constant multiple of $(1+t)(1-t)^{-1} $ is a key to connect the hermitian pairing $\mathrm{Bl}_K$ with the anti-one $\mathcal{Q}_{\psi_0 }$.
This theorem implies that, on the assumption,
if $\alpha_K$ and $\Delta(-1) \in \Z$ are not zero-divisors in $\Lambda / ( \Delta) $,
then the Blanchfield pairing $\mathrm{Bl}_K $ can be completely recovered from the pairing $\mathcal{Q}_{\psi_0 } $
(see \S \ref{S52} for such examples);
however, conversely, if either of $\alpha_K$ and $\Delta(-1) \in \Z$ is zero-divisor,
all information of $ \mathrm{Bl}_K $ can not be recovered from the pairing $\mathcal{Q}_{\psi_0 } $; 
Section \ref{S51} observes
some cases where the cup products $\mathcal{Q}_{\psi } $ lose many information of $\mathrm{Bl}_K$, according to complexity of the Alexander module $ H_1( S^3 \setminus K ;\Z[t^{\pm 1}] ) $. 
In addition, in Section \ref{S53}, we will see that, for even the torus knot,
the problem of the recovery is not so easy.


Finally, in Appendix \ref{999}, we will see that
``the cocycle knot-invariants \cite{CJKLS} from Alexander quandles" also turn to be topologically recovered from the Blanchfield pairing (see Theorem \ref{clA21l31} for the details);


This paper is organized as follows.
Section 2 
reviews the computation in \cite{Nos5}, and Section 3 gives the proof of Theorem \ref{clAl22}. 
Sections 4--6 give some computations of the pairings.

\

\noindent
{\bf Conventional notation.} \
Every knot $K$ is understood to be smooth, oriented, and embedded in the 3-sphere $S^3$ as a circle.
We regard the complement $S^3 \setminus K $ as the 3-manifold which is obtained from $S^3$ by removing an open tubular
neighborhood of $K$.
In ordinary papers on the Blanchfield pairing,
by $\Lambda $ we mean the Laurent polynomial ring $\Z[t^{\pm 1}]$,
with involution $\bar{t}=t^{-1}$.
Moreover, we denote a $\Lambda $ module by $M$;
furthermore, 
let $M^{\rm op}$ be $M$ with the $\Lambda $ module structure by $\lambda \cdot m:= \bar{\lambda}m$, where $\lambda \in \Lambda , m \in M^{\rm op}=M$.

\section{Review; diagrammatic computation of the cohomology pairing.}\label{Ss3}
This section strictly describes diagrammatic computation of the cohomology pairing,
and gives the proof of Theorem \ref{clAl22}.
We will need some knowledge of quandles before proceeding.

Throughout this section, we fix two $\Z[t^{\pm 1} ]$-modules $X$ and $A$.
Further, define a binary operation on $X$ by
\begin{equation}\label{kihon} \lhd: X \times X \lra X ; \ \ \ \ \ \ (a,b) \longmapsto t (a-b)+b . \end{equation}
The pair $(X,\lhd)$ is called {\it an Alexander quandle} \cite{CJKLS,Joy,Nos1}.

We review colorings.
Pick a knot $K \subset S^3$ with orientation and an oriented knot diagram $D$ of $K.$
A map $\mathcal{C}: \{ \mbox{arcs of $D$} \} \to X$ over $f$ is an $X$-{\it coloring} if
it satisfies 
$\mathcal{C}(\alpha_{\tau}) \lhd \mathcal{C}(\beta_{\tau}) = \mathcal{C}(\gamma_{\tau})$ at each crossing of $D$ illustrated as
Figure \ref{koutenpn}. 
Let $\mathrm{Col}_X(D) $ denote the set of all $X$-colorings.
By definition, this $\mathrm{Col}_X(D ) $ canonically injects into the product $X^{\alpha (D)}$, where $\mathrm{Arc} (D)$ is the number of arcs of $D$.
Therefore, $\mathrm{Col}_X(D)$ 
serves as a $\Lambda$-submodule of $X^{ \mathrm{Arc} (D)}$.
Furthermore, the diagonal submodule $X_{\rm diag} \subset M^{ \alpha (D)}$
is contained in $\mathrm{Col}_X(D)$, and is a direct summand of $\mathrm{Col}_X(D )$.
Thus, we denote by $\mathrm{Col}^{\rm red}_X(D)$ another direct summand, i.e.,
$\mathrm{Col}_X(D ) \cong X_{\rm diag} \oplus \mathrm{Col}^{\rm red}_X(D). $


\vskip 0.419937pc

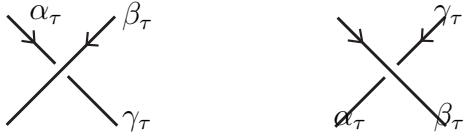
\begin{figure}[htpb]
\begin{center}
\begin{picture}(100,26)
\put(-22,27){\large $\alpha_{\tau} $}
\put(13,25){\large $\beta_{\tau} $}
\put(13,-13){\large $\gamma_{\tau} $}

\put(-36,3){\pc{kouten4}{0.27304}}

\put(131,-13){\large $\beta_{\tau} $}
\put(93,-13){\large $\alpha_{\tau} $}
\put(131,27){\large $\gamma_{\tau} $}
\end{picture}
\end{center}
\caption{\label{koutenpn} Positive and negative crossings, where arcs are assigned by $A$.}
\end{figure}

\vskip 0.995pc


Furthermore, one introduces a sesquilinear form on the $\Lambda $-module $ \mathrm{Col}_X(D)$ as follows. 
Take another Alexander quandle $X'.$
Let $\psi : X \times X' \ra A$ be a sesquilinear map over $\Z$.
Define a map
\begin{equation}\label{aaa} \mathcal{Q}_{D, \psi } : \mathrm{Col}_X(D) \times \mathrm{Col}_{X'}(D) \lra A; \ \ \ ( \mathcal{C}, \mathcal{C}' ) \longmapsto \sum_{\tau} \epsilon_{\tau} \cdot \psi \bigl( \mathcal{C}(\alpha_{\tau}) - \mathcal{C}(\beta_{\tau} ) ,
\ \mathcal{C}'(\beta_{\tau}) (1-t^{-1}) \bigr),\end{equation}
where $\tau $ runs over all the crossings of $D$, and
the symbols $\alpha_{\tau} , \ \beta_{\tau} $ are the arcs and $\epsilon_{\tau} \in \{ \pm 1\}$ is the sign of $\tau$ according to Figure \ref{koutenpn}.
The sum in \eqref{aaa} is sometimes called {\it a weight sum}.
Then, the sesquilinear form $ \mathcal{Q}_{D, \psi } $ is topologically detected by the following sense:
\begin{thm}[{A special case of \cite[Theorem 2.2]{Nos5}}]\label{mainthm1}
Let $E_K= \SK$. Let $M$ be $X$ and $M'$ be $X'$ as above.
Then, there are $\Lambda$-module isomorphisms
$$ \mathrm{Col}_{X } (D) \cong H^1(E_K , \ \partial E_K ;M ) \oplus M, \ \ \ \ \ \mathrm{Col}_{X }^{\rm red} (D) \cong H^1(E_K , \ \partial E_K ;M ) . $$
Furthermore, on the isomorphisms, the restriction of $ \mathcal{Q}_{\psi,D}$ on $ \mathrm{Col}_{X }^{\rm red} (D) \times
\mathrm{Col}_{X' }^{\rm red} (D)$ is
equal to the bilinear cohomology pairing $\mathcal{Q}_{ \psi }$ in \eqref{kiso}.
\end{thm}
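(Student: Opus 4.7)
The plan is to build the identifications from the diagram $D$ by putting a CW-decomposition on the pair $(E_K,\partial E_K)$, computing the cellular cochain complex with local coefficients $M$, and then matching the cellular cup-cap product against the weight-sum \eqref{aaa}.

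For the first part, I would decompose $E_K$ so that arcs of $D$ correspond to dual 1-cells, crossings correspond to 2-cells attached via the Wirtinger relators, and a collar neighborhood of $\partial E_K$ absorbs one 0-cell and one 1-cell per arc. The local coefficients are specified through the abelianization $\pi_1(E_K)\to \langle t\rangle$. The cellular 1-coboundary at the 2-cell attached to a crossing $\tau$ then evaluates to $t\,\mathcal{C}(\alpha_\tau)+(1-t)\mathcal{C}(\beta_\tau)-\mathcal{C}(\gamma_\tau)$, so the cocycle condition is precisely the Alexander quandle relation $\mathcal{C}(\alpha_\tau)\lhd \mathcal{C}(\beta_\tau)=\mathcal{C}(\gamma_\tau)$. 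This yields
\[
Z^1(E_K;M) \ \cong \ \mathrm{Col}_{X}(D),\qquad
Z^1(E_K,\partial E_K;M) \ \cong \ \mathrm{Col}^{\rm red}_{X}(D),
\]
and, after checking that the relative 1-coboundaries vanish on this cell decomposition, these cocycle groups already compute the corresponding $H^1$. The summand $M=X_{\rm diag}$ accounts for the kernel of the restriction to $\partial E_K$, giving the claimed direct sum decomposition.

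For the pairing, I would fix an explicit cellular diagonal approximation. The cup product $C^1\otimes C^1\to C^2$ evaluated at the 2-cell of a crossing $\tau$ sees only the incident 1-cells $\alpha_\tau,\beta_\tau,\gamma_\tau$; computing on the chosen diagonal, the $t$-twisting of the local system along the overstrand feeds in a factor $(1-t^{-1})$ on the second argument, producing $\psi\bigl(\mathcal{C}(\alpha_\tau)-\mathcal{C}(\beta_\tau),(1-t^{-1})\mathcal{C}'(\beta_\tau)\bigr)$ after applying $\psi$. The Seifert surface $\Sigma$ represents the generator of $H_2(E_K,\partial E_K;\Z)$, and its cellular dual meets each crossing-2-cell with multiplicity equal to the sign $\epsilon_\tau$, so the cap product $\cdot\cap[\Sigma]$ extracts precisely $\sum_\tau\epsilon_\tau$ of the above contributions, reproducing \eqref{aaa}.

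The main obstacle is the cup-product step: one must pin down a concrete diagonal approximation on this CW-complex and verify that the twisting of the local system across the overstrand is what produces the factor $(1-t^{-1})$ uniformly at every crossing, while the orientation of the 2-cell attached at a crossing yields the sign $\epsilon_\tau$ consistent with the convention of Figure~\ref{koutenpn}. Once the cellular cup-cap product is written in these terms, the theorem follows from a direct comparison with \eqref{aaa} using sesquilinearity of $\psi$.
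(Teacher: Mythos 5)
First, a remark on scope: the paper does not prove this statement at all --- it is imported as a special case of \cite[Theorem 2.2]{Nos5} --- so your proposal has to be judged as a reconstruction of that external proof. The first half of your plan (arcs as $1$-cells, crossing $2$-cells attached along Wirtinger relators, so that the twisted cocycle condition $t\,\mathcal{C}(\alpha_\tau)+(1-t)\mathcal{C}(\beta_\tau)=\mathcal{C}(\gamma_\tau)$ is exactly the coloring rule) is sound in outline, though the splitting $\mathrm{Col}_X(D)\cong H^1(E_K,\partial E_K;M)\oplus M$ needs more than ``the relative coboundaries vanish'': you must track the meridian and longitude cells of $\partial E_K$ and verify that the diagonal colorings $X_{\rm diag}$ are precisely the complementary summand, which is a short but nontrivial diagram chase in the long exact sequence of the pair.

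The genuine gap is in the cap-product step. In the CW structure you describe, the boundary of the crossing $2$-cell $e_\tau$ is, with trivial $\Z$-coefficients, $\gamma_\tau-\alpha_\tau$, which is supported on \emph{interior} $1$-cells (the arcs), not on $\partial E_K$. Hence $\partial\bigl(\sum_\tau\epsilon_\tau e_\tau\bigr)=\sum_\tau\epsilon_\tau(\gamma_\tau-\alpha_\tau)$ assigns to each arc the difference of the signs of the two undercrossings at its ends, which is nonzero for any diagram with crossings of both signs; the only integral combination of crossing $2$-cells that is a (relative) cycle is $\sum_\tau e_\tau$. So the assertion that the cellular representative of $[\Sigma]$ meets each crossing $2$-cell with multiplicity $\epsilon_\tau$ is false at the chain level, and the signs $\epsilon_\tau$ in \eqref{aaa} cannot be produced this way. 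Likewise, the uniform factor $(1-t^{-1})$ is asserted (``twisting along the overstrand'') rather than derived. The argument in \cite{Nos5} circumvents exactly this point: one rewrites $\bullet\cap\Sigma$ as a triple product $\langle u\smile v\smile \mu^{*},[E_K,\partial E_K]\rangle$ with $\mu^{*}$ the meridian dual, and evaluates it on an explicit chain-level representative of the relative fundamental $3$-class built from the diagram (an Inoue--Kabaya-type chain map); both $\epsilon_\tau$ and $(1-t^{-1})$ then fall out of the oriented $3$-simplices assigned to each crossing and the extra $\mu^{*}$ factor. To repair your route you would need either to pass to this $3$-dimensional picture, or to exhibit a correct relative $2$-cycle representing $[\Sigma]$ together with a proof that its correction terms pair trivially with $u\smile v$ for the relative cocycles arising from colorings.
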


To summarize, the point is that, given a diagram $D$,
we can diagrammatically compute the form $ \mathcal{Q}_{\psi,D }$ by definitions,
and 
that we need no description of the Seifert surface $\Sigma$; in a comparison, 
there are approaches to the signature of knots without using Seifert surfaces \cite{Kea2,MP}.

\subsection{Proof of Theorem \ref{cl022}.}\label{Ss375}
To prove the invariance
of the cohomology pairing \eqref{kiso} under $S$-equivalence, 
we review the $S$-equivalence of knots.
While there are several definitions of the $S$-equivalences (see \cite{T2,Lic}), this paper uses the definition in the sense of \cite{NS}.
Two knots $K$ and $ K'$ are {\it $S$-equivalent} if
they are related by a finite sequence of the (double delta) local moves shown in Figure \ref{fig.color22}.
Furthermore, Trotter \cite{T2} showed that $K$ and $ K'$ are $S$-equivalent if and only if the associated Blanchfield pairings are isomorphic as bilinear forms.

\begin{figure}[htpb]
\begin{center}
\begin{picture}(100,70)
\put(-72,23){\pc{doubledelta}{0.1994}}
\put(92,23){\pc{doubledelta2}{0.19924}}

\put(-78,1){\small $\alpha_1$}
\put(-78,10){\small $\alpha_2$}
\put(-49,54){\small $\alpha_3$}
\put(-40,63){\small $\alpha_4$}
\put(-26,63){\small $\alpha_5$}
\put(-18,54){\small $\alpha_6$}
\put(10,1){\small $\alpha_8$}
\put(10,10){\small $\alpha_7$}
\put(-57,-11.5){\small $\alpha_{11}$}
\put(-74,-11.5){\small $\alpha_{12}$}
\put(-10,-11.5){\small $\alpha_{10}$}
\put(5,-11.5){\small $\alpha_9$}

\put(-78,50){\LARGE $D$}
\put(178,50){\LARGE $D' $}

\put(82,1){\small $\beta_1$}
\put(82,10){\small $\beta_2$}
\put(114,54){\small $\beta_3$}
\put(124,63){\small $\beta_4$}
\put(136,63){\small $\beta_5$}
\put(146,54){\small $\beta_6$}
\put(174,1){\small $\beta_8$}
\put(174,10){\small $\beta_7$}
\put(106,-13){\small $\beta_{11}$}
\put(93,-13){\small $\beta_{12}$}
\put(154,-13){\small $\beta_{10}$}
\put(169,-13){\small $\beta_9$}

\put(33,26){\huge $ \longleftrightarrow $}
\end{picture}

\end{center}
\vskip 0.6937pc
\caption{A double delta move with 24 arcs \label{fig.color22}}
\end{figure}
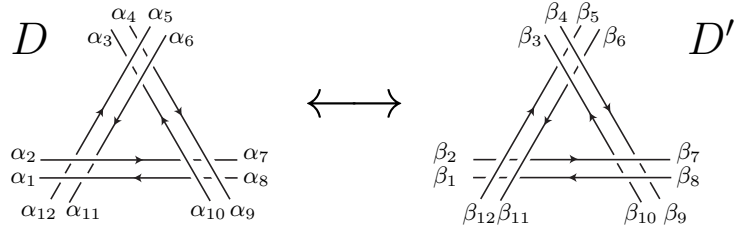

Since the following lemma is elementary, we omit writing details of the proofs.

\begin{lem}\label{clAl293} Consider an $X$-coloring and an $X'$-coloring of the eight arcs illustrated in the figure below, where the alphabets are elements in $X$ or $X'$.
Then, the weight sum with respect to the four crossings is $\psi \bigl( (1-t) (a-b), \ c'-d' \bigr)\in A.$
\end{lem}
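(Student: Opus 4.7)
The plan is to verify the identity by a direct local computation at each of the four crossings, using the Alexander quandle rule to write every arc color in terms of a small number of free generators, and then using sesquilinearity of $\psi$ to collapse the weight sum. First, I would label the eight arcs of the tangle and, starting from a minimal set of ``input'' colors (including $a, b \in X$ and $c', d' \in X'$, which appear in the target expression), propagate through the four crossings via
\begin{equation*}
\mathcal{C}(\alpha) \lhd \mathcal{C}(\beta) \;=\; t\bigl(\mathcal{C}(\alpha) - \mathcal{C}(\beta)\bigr) + \mathcal{C}(\beta),
\end{equation*}
and analogously for $\mathcal{C}'$. This forces each of the remaining arc colors to be an explicit $\Z[t^{\pm 1}]$-linear combination of the chosen inputs.

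Next I would evaluate at each crossing $\tau$ the local contribution
$\epsilon_\tau\, \psi\bigl(\mathcal{C}(\alpha_\tau) - \mathcal{C}(\beta_\tau),\; \mathcal{C}'(\beta_\tau)(1 - t^{-1})\bigr)$
from \eqref{aaa}. The quandle rule expresses every difference $\mathcal{C}(\alpha_\tau) - \mathcal{C}(\beta_\tau)$ as a scalar in $\Z[t^{\pm 1}]$ times $a - b$ (or times a secondary difference that will ultimately cancel), while sesquilinearity of $\psi$ lets me move these scalars, together with the fixed factor $(1 - t^{-1})$, freely across $\psi$. After summing the four signed terms and regrouping by the inputs, I expect all contributions involving anything other than the pair $(a - b, c' - d')$ to cancel pairwise, leaving exactly
$(1-t)\,\psi(a - b,\; c' - d') = \psi\bigl((1-t)(a-b),\; c'-d'\bigr)$.

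The main obstacle is purely combinatorial bookkeeping: one has to read off carefully from the figure both the signs $\epsilon_\tau$ and the assignment of over/under arcs $\alpha_\tau, \beta_\tau$ at each of the four crossings, and then track a fair number of $\Z[t^{\pm 1}]$-coefficients as they migrate between the two slots of $\psi$. Because the lemma is described as ``elementary'' and its proof omitted, I expect the four crossings to be arranged so that the Alexander-quandle action produces a telescoping pattern in which the non-trivial cancellation is essentially automatic once the notation is fixed.
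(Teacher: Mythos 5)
Your plan is exactly the argument the paper has in mind: the paper omits the proof of this lemma precisely because it is the direct four-crossing computation you describe, with the $(1-t)$ in the statement arising, as you correctly indicate, from pushing the factor $(1-t^{-1})$ out of the second slot of $\psi$ into the first via sesquilinearity (that is, $\psi(x,(1-t^{-1})y)=\psi((1-t)x,y)$), and the difference $c'-d'$ arising from the opposite signs of the paired crossings sharing the same first-slot difference. The only caveat is that your write-up remains a plan --- you never actually read off the over/under assignments and signs from the figure and sum the four signed terms --- but since that bookkeeping is exactly the part the paper itself declares elementary and omits, your approach matches the intended proof.
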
\begin{figure}[htpb]
\begin{center}
\begin{picture}(300,80)
\put(33,46){\pc{doubledelta4}{0.200994}}

\put(39,20){\normalsize $d$}
\put(32,30){\normalsize $c$}
\put(32,65){\normalsize $b$}
\put(39,74){\normalsize $a$}
\put(77,20){\normalsize $u$}
\put(88,30){\normalsize $v$}

\put(233,46){\pc{doubledelta4}{0.200994}}

\put(239,20){\normalsize $d'$}
\put(232,30){\normalsize $c'$}
\put(232,65){\normalsize $b'$}
\put(239,74){\normalsize $a'$}
\put(277,20){\normalsize $u'$}
\put(288,30){\normalsize $v'$}

\end{picture}
\end{center}
\vskip -2.29937pc
\end{figure}

\begin{proof}[Proof of Theorem \ref{cl022}.]
Suppose that two knots $K $ and $K'$ are $S$-equivalent.
Let $D$ and $D'$ be diagrams of $K$ and $K'$, respectively.
By the results mentioned above, we may assume that the difference between $D$ and $D'$ is only a double delta move.
Given an $X$-coloring $\mathcal{C}$ of $D$,
consider the assignment $\mathcal{C}_1$ of $D'$ such that $\mathcal{C}_1(\beta_i) =
\mathcal{C}(\alpha_i) $. We can easily check that $\mathcal{C}_1 $ is an $X$-coloring of $D' $,
and the correspondence $\mathcal{C} \mapsto \mathcal{C}_1 $ gives rise to
a $\Lambda $-isomorphism $\lambda : \mathrm{Col}_X(D) \ra \mathrm{Col}_X(D') $.
Here, we can easily check that 
this $ \lambda$ preserves the direct sum decomposition
$Col_X(D^{\bullet }) = H^1(E_{K^{\bullet }}; \partial E_{K^{\bullet }};M) \oplus M.$
Furthermore, we can easily verify from Lemma \ref{clAl293} the equality $\mathcal{Q}_{\psi,D}(\mathcal{C},\mathcal{C}')= \mathcal{Q}_{\psi,D'}(\lambda (\mathcal{C}),\lambda (\mathcal{C}')) $ for any colorings $\mathcal{C}, \mathcal{C}'. $ In the sequel, the associated bilinear forms $ \mathcal{Q}_{\psi}$ and $\mathcal{Q}_{\psi}'$
are equivalent; hence, so are the corresponding cohomology pairings by Theorem \ref{mainthm1}, as required.
\end{proof}




\section{Proof of Theorem \ref{clAl22}}\label{99822}

The end of this section is devoted to proving Theorem \ref{clAl22}, which gives a trial to recover the Blanchfield pairing from $\mathcal{Q}_{\psi}$.
The reader, who has mainly an interest in examples of computation,
may read only Proposition \ref{bbbn2}, and skip this section.  
In this section,  given a matrix $V$, we denote the transposed matrix by $V'.$

\subsection{The Blanchfield pairing from cup product}\label{kkl2}
We first recall the calculation of the Blanchfield pairing in terms of homology \cite{FP,Kea}.
Choose a Seifert surface $F$ of $K$ whose genus is $g$, where
we may assume the existence of a bouquet of circles $W \subset F$ such that $W$ is a deformation retract of $F$
and the inclusion $F \subset S^3$ is isotopic to the standard embedding $W \subset F$.
Then, we have the Seifert form $\alpha : H_1(F;\Z) \otimes H_1(F;\Z) \ra \Z$; see \cite[Chapter 6]{Lic} for the definition.
The matrix presentation is commonly written by $V \in \mathrm{Mat}(2g\times 2g;\Z)$, and is called {\it the Seifert matrix}. 
\begin{thm}[{\cite{Kea}. See also \cite{FP}\footnote{Strictly speaking, 
the notation of $t$ is that in \cite{Kea,FP}. However, if we replace $t$ by $t^{-1}$, the notations are equal. }}]\label{bbbn24}
The first homology $H_1(E_K ; \Lambda) $ is isomorphic to
the quotient $\Lambda^{2g}/ (tV- V') \Lambda^{2g} $.
The Blanchfield pairing is isomorphic to the bilinear form
$$ \bigl( \Lambda^{2g}/ (tV- V') \Lambda^{2g} \bigr)^2 \lra \Lambda/\Delta; \ \ \ \ \ \ (v,w) \longmapsto
\ (1-t) (\overline{v} (tV- V')^{-1}) w' . $$
\end{thm}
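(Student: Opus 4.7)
The plan is to follow the standard infinite-cyclic-cover computation due to Kearton and Friedl--Powell, which proceeds in three steps: cut along a Seifert surface, run Mayer--Vietoris to identify the Alexander module, then compute the Blanchfield pairing via equivariant intersection in the cover.

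First I would cut $E_K$ along the Seifert surface $F$ of genus $g$ to obtain a compact 3-manifold $Y$ whose boundary contains two homeomorphic copies $F_{+}$ and $F_{-}$ of $F$ joined by an annulus along $\partial F$. Alexander/Poincar\'{e}--Lefschetz duality gives $H_1(Y;\Z)\cong \Z^{2g}$, and under the natural dual basis the two inclusion-induced maps $(i_{\pm})_*:H_1(F;\Z)\to H_1(Y;\Z)$ are represented by $V$ and $V^{\prime}$ respectively; this is a direct translation of the definition $V_{ij}=\mathrm{lk}(x_i^{+},x_j)$ of the Seifert matrix in terms of linking of push-offs.

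Second, the infinite cyclic cover is $\widetilde{E_K}=\bigsqcup_{n\in \Z} t^n Y$ glued along $t^n F_{+}\sim t^{n+1} F_{-}$. The associated Mayer--Vietoris sequence with $\Lambda$-coefficients yields
\begin{equation*}
H_1(F;\Z)\otimes \Lambda \xrightarrow{\ tV-V^{\prime}\ } H_1(Y;\Z)\otimes \Lambda \lra H_1(\widetilde{E_K};\Z) \lra 0,
\end{equation*}
using that the gluing identifies a cycle $x_i$ in $F$ with $t\cdot (i_{+})_*(x_i)-(i_{-})_*(x_i)$. Since $\det(tV-V^{\prime})\doteq \Delta$, this identifies $H_1(E_K;\Lambda)\cong \Lambda^{2g}/(tV-V^{\prime})\Lambda^{2g}$ and gives the first assertion.

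Third, for the pairing formula I would invoke the Bockstein/intersection definition of $\mathrm{Bl}_K$ coming from the short exact sequence $0\to \Lambda \to \Q(t)\to \Q(t)/\Lambda \to 0$. Given a representative $v\in \Lambda^{2g}$, the matrix $tV-V^{\prime}$ becomes invertible after tensoring with the localization $\Lambda_{(\Delta)}=\Lambda[1/\Delta]$, so one can lift $v$ to a 2-chain $c\in C_2(\widetilde{E_K};\Lambda_{(\Delta)})$ whose coefficient vector is $(tV-V^{\prime})^{-1}v$. For another representative $w$, the Blanchfield pairing is by definition the class of the equivariant intersection $\langle c, w\rangle_{\widetilde{E_K}}\in \Lambda_{(\Delta)}/\Lambda$, which is then pushed to $\Lambda/(\Delta)$ via the inclusion recorded in the footnote of \S\ref{jiji2}. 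Evaluating this equivariant intersection against the Seifert basis of $H_1(Y;\Z)$ amounts to pairing $(tV-V^{\prime})^{-1}v$ with $w^{\prime}$, with an extra overall factor $(1-t)$ coming from the difference of the two push-offs $F_{+}$ and $F_{-}$ used in building the cover; after applying the involution on the left slot this gives $(1-t)\overline{v}(tV-V^{\prime})^{-1}w^{\prime}$, matching the stated formula.

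The main obstacle is bookkeeping rather than mathematics: the statement is sensitive to the conventions $\bar t=t^{-1}$ versus $\bar t=t$, positive versus negative push-off, and whether one uses $tV-V^{\prime}$ or $V-tV^{\prime}$; one must verify that the $(1-t)$-normalization and the use of $tV-V^{\prime}$ (as opposed to $t^{-1}V-V^{\prime}$) is precisely what reproduces the Blanchfield pairing of \eqref{blaeq221} under the chosen conventions of the paper. Once these choices are pinned down so as to agree with those of \cite{Kea,FP}, the identification is a direct transcription of their calculation, as indicated in the footnote.
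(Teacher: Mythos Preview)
The paper does not prove this theorem at all: it is quoted verbatim from \cite{Kea} (see also \cite{FP}) and used as a black box to set up Proposition~\ref{bbbn2} and Corollary~\ref{bb1787}. So there is no ``paper's own proof'' to compare against.

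That said, your sketch is the standard argument of those references and is essentially correct. A couple of points worth tightening. First, the isomorphism $H_1(Y;\Z)\cong\Z^{2g}$ is not automatic for an arbitrary Seifert surface complement; it uses the normalization the paper makes just above the theorem (the surface is isotoped so that a wedge of $2g$ circles embeds standardly), which forces $Y$ to be a handlebody. You should state that hypothesis explicitly rather than appeal to Alexander duality alone. Second, your final paragraph correctly identifies the real content as convention-matching, but you stop short of actually doing it: in particular the paper's footnote warns that the $t$ here is the $t^{-1}$ of \cite{Kea,FP}, so you should check that after this swap the presentation matrix really comes out as $tV-V'$ (rather than $t^{-1}V-V'$ or $V-tV'$) and that the $(1-t)$ factor has the claimed sign. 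None of this is difficult, but since the whole statement lives or dies on conventions, leaving it as ``bookkeeping'' is the one genuine gap in an otherwise sound outline.
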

Next, in cohomological terminology, we will reformulate the Blanchfield pairing:
Considering the exact sequence $0 \ra \Lambda \stackrel{\Delta}{\lra} \Lambda \lra \Lambda /(\Delta )\ra 0,$
we have the Bockstein map
\begin{equation}\label{oo0}
\beta: H^i(E_K ,\partial E_K ;\Lambda /(\Delta )) \lra H^{i+1}(E_K ,\partial E_K ;\Lambda ).
\end{equation}
Here, it is worth noticing that thie map with $i=1$ is an isomorphism, since
$H^{2}(E_K ,\partial E_K ;\Lambda ) \cong H_{1}(E_K ;\Lambda ) $ is annihilated by the Alexander polynomial $\Delta $; see \cite[Theorem 6.17]{Lic}.
We define {\it the cohomological Blanchfield pairing} to be the bilinear map
$$\mathrm{cBl} :H^1(E_K ,\partial E_K ; \Lambda /(\Delta ))^2 \lra \Lambda /(\Delta ) $$
by setting $\mathrm{cBl} (u,v)= \langle \beta (u) \smile v , [ E_K ,\partial E_K ]\rangle .$
Consider the following kernel:
$$ \Ker (tV- V')_{\Lambda/\Delta}:=  \{w \in (\Lambda/(\Delta ))^{2g} \ | \ (tV- V')w=0 \in (\Lambda/(\Delta))^{2g} \ \} .$$
Furthermore, we introduce two maps
\begin{equation}\label{oo38} \psi_l : \Delta \Lambda^{\rm op} \otimes \Lambda\lra \Z[t^{\pm 1}]/ ( \Delta) ; \ \ \ \ \Delta x\otimes y \longmapsto \bar{x}y ,
\end{equation}
\[\psi_r : \Lambda^{\rm op} \otimes \Delta \Lambda \lra \Z[t^{\pm 1}]/ ( \Delta); \ \ \
z\otimes \Delta w\longmapsto \bar{z}w . \]
\begin{prop}\label{bbbn2}
Choose a section $\mathfrak{s}: (\Lambda/\Delta )^{2g} \ra \Lambda^{2g}$.
The cohomology $H^1(E_K ,\partial E_K ; \Lambda /(\Delta )) $ is isomorphic to $ \Ker (tV- V')_{\Lambda/\Delta} $.
Furthermore, the cohomological Blanchfield pairing is isomorphic to the bilinear form
$$\Ker (A)_{\Lambda/\Delta} \times \Ker (A)_{\Lambda/\Delta}\lra \Lambda/(\Delta); \ \ \ \ \ \ (v,w) \longmapsto
\ (1-t) \psi_l ( (tV- V') \mathfrak{s}(v), w ) . $$
Here, $\psi_l: (\Delta \Lambda^{\rm op} \otimes \Lambda)^{2g} \ra\Lambda/(\Delta) $ is the
direct sum of \eqref{oo38}.
\end{prop}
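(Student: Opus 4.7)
The plan proceeds in two stages: identify the cohomology group $H^1(E_K,\partial E_K;\Lambda/(\Delta))$ with $\Ker(tV-V')_{\Lambda/\Delta}$, then compute the cohomological Blanchfield pairing on that concrete model and match it against Theorem \ref{bbbn24}.

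For the first stage, I would combine the Bockstein long exact sequence attached to $0\to\Lambda\xrightarrow{\Delta}\Lambda\to\Lambda/(\Delta)\to 0$ with Poincar\'e--Lefschetz duality $H^i(E_K,\partial E_K;\Lambda)\cong H_{3-i}(E_K;\Lambda)$. Since $\Delta$ annihilates $H_1(E_K;\Lambda)\cong H^2(E_K,\partial E_K;\Lambda)$, and (as the excerpt records) $\beta$ in degree one is already an isomorphism, this yields $H^1(E_K,\partial E_K;\Lambda/(\Delta))\cong H_1(E_K;\Lambda)\cong\Lambda^{2g}/(tV-V')\Lambda^{2g}$ via Theorem \ref{bbbn24}. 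To repackage the cokernel as $\Ker(tV-V')_{\Lambda/\Delta}$, I would use the identity $\mathrm{adj}(tV-V')\cdot(tV-V')=\Delta\cdot I$: the rule $y\mapsto\mathrm{adj}(tV-V')\,y\bmod\Delta$ is well defined on the cokernel, lands in the kernel, and is a $\Lambda$-isomorphism because $\det(tV-V')=\Delta$ is a non-zero-divisor in $\Lambda$.

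For the second stage, I would unpack $\beta$ at the cochain level using a CW decomposition of $E_K$ adapted to the Seifert surface so that the middle cellular boundary over $\Lambda$ is exactly $tV-V'$. Given $\bar u\in\Ker(tV-V')_{\Lambda/\Delta}$, the lifted vector $\mathfrak{s}(\bar u)\in\Lambda^{2g}$ defines a $1$-cochain whose $\Lambda$-coboundary equals $\Delta$ times the $2$-cocycle $(tV-V')\mathfrak{s}(\bar u)/\Delta\in\Lambda^{2g}$, and the class of that $2$-cocycle realises $\beta(u)$ in $H^2(E_K,\partial E_K;\Lambda)\cong H_1(E_K;\Lambda)$. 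Capping $\beta(u)\smile v$ against $[E_K,\partial E_K]$ then reduces to the intersection pairing on $H_1(E_K;\Lambda)$, which by Theorem \ref{bbbn24} evaluates to $(1-t)\,\overline{x}\,v$ with $\Delta x=(tV-V')\mathfrak{s}(\bar u)$; this is exactly $(1-t)\,\psi_l\bigl((tV-V')\mathfrak{s}(\bar u),v\bigr)$ by the very definition of $\psi_l$.

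The main obstacle I expect is correctly tracking the factor $1-t$ and the order of arguments (i.e.\ why $\psi_l$ rather than $\psi_r$ appears) through the Bockstein, the cup product, and the duality identifications: the factor $1-t$ has geometric origin in the difference between the above and below push-offs of the Seifert surface in the infinite cyclic cover, and matching the cochain-level computation with the Seifert-matrix derivation underlying Theorem \ref{bbbn24} is the technical core of the argument. Independence of the section $\mathfrak{s}$ is then automatic: if $\bar u=0$ then $\mathfrak{s}(\bar u)\in\Delta\Lambda^{2g}$, so $(tV-V')\mathfrak{s}(\bar u)\in\Delta\cdot(tV-V')\Lambda^{2g}$ and $\psi_l$ of this vector with any $v$ vanishes modulo $\Delta$.
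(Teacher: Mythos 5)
Your overall architecture matches the paper's: both identify $H^1(E_K,\partial E_K;\Lambda/(\Delta))$ with the kernel of $tV-V'$ over $\Lambda/(\Delta)$ using a cell structure whose middle boundary map is $tV-V'$, and both compute the Bockstein at the cochain level as $u\mapsto (tV-V')\mathfrak{s}(u)$, with the division by $\Delta$ and the conjugation absorbed into $\psi_l$. The paper gets all of this from Trotter's explicit relative CW complex \eqref{oop2} together with Trotter's cochain-level cup product formula $c^1\smile c^2=(1-t)\,f_1\cdot g_1'$; that single formula is where the factor $1-t$, the evaluation against $[E_K,\partial E_K]$, and the $\psi_l$-versus-$\psi_r$ asymmetry are all resolved at once.

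The genuine gap is exactly the step you flag as ``the technical core'': you propose to evaluate $\langle\beta(u)\smile v,[E_K,\partial E_K]\rangle$ by reducing to ``the intersection pairing on $H_1(E_K;\Lambda)$, which by Theorem \ref{bbbn24} evaluates to $(1-t)\bar{x}v$''. Theorem \ref{bbbn24} does not say that: it gives the Blanchfield linking form $(v,w)\mapsto(1-t)\overline{v}(tV-V')^{-1}w'$ on the torsion module $\Lambda^{2g}/(tV-V')\Lambda^{2g}$, which is a different pairing from the Kronecker/cup evaluation between $H^2(E_K,\partial E_K;\Lambda)$ and $H^1(E_K,\partial E_K;\Lambda/(\Delta))$ occurring in $\mathrm{cBl}$; conflating the two is circular, since the compatibility of these pairings under duality is essentially what Corollary \ref{bb1787} establishes \emph{after} the proposition is proved. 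To close the gap you would need either a cochain-level cup product formula on your CW structure (this is precisely what the paper imports from Trotter) or a careful verification that the duality identifications transport one pairing to the other; neither is supplied. Two smaller remarks: your first-stage detour through the Bockstein long exact sequence, Poincar\'e--Lefschetz duality and the adjugate is correct but unnecessary --- the kernel description falls out of the explicit cochain complex, which you need anyway, and your adjugate map is the paper's $\kappa$ --- and your argument for independence of the section $\mathfrak{s}$ is fine.
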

To prove Proposition \ref{bbbn2}, we review from \cite{T} the relative cellular chain complex of $( E_K, \partial E_K)$ with local coefficients $R$. Here, we let $R$ be either $\Lambda$ or $ \Lambda/(\Delta)$.
According to \cite[Proposition 4.1]{T}, the complex is isomorphic to 
\begin{equation}\label{oop2}C_*: 0 \ra R \stackrel{\partial_3}{\lra} R^{2g} \oplus R^{2} \stackrel{\partial_2}{\lra} R^{2g}\oplus R \stackrel{\partial_1}{\lra} R \ra 0 .
\end{equation}
Here the differential maps $\partial_*$ have matrix presentations
\[ \partial_3= ( 0, 0, \cdots, 0, 1-t), \ \ \ \ \partial_2= \left( \begin{array}{ccc} tV-V' & 0 & 0 \\ 0 & 1-t & 0 \\\end{array} \right) , \ \ \ \partial_1= (0, \cdots, 0, 1-t)' . \]
Furthermore, we consider the cochain complex $C^*:= \Hom( C_* ;R) $.
Pick a 2-cochain and a 1-cochain of the forms
$$c^2=(f_1,f_2) \in \Hom(R^{2g},R )\oplus\Hom(R^{2},R ) , \ \ \ c^1=(g_1,g_2) \in \Hom(R^{2g},R )\oplus\Hom(R,R ) $$
Let $ c^1 \smile c^2 $ be $(1-t) f_1 \cdot g_1' \in \Hom(R,R )= C^3$.
Then, it is shown \cite{T} that the map $ H^1 \otimes H^2 \ra H^3 $ induced by $\smile : C^1 \otimes C^2 \ra C^3$ coincides with the natural cup product.
\begin{proof}[Proof of Proposition \ref{bbbn2}]
Notice that $ \mathrm{det}(tV- V' )= \Delta$.
By the presentation \eqref{oop2}, we have a canonical isomorphism $H^1(E_K ,\partial E_K ;\Lambda /(\Delta )) \cong
\Ker (tV- V')_{\Lambda/\Delta} $, and can identify the Bockstein map $\beta: H^1(E_K ,\partial E_K ;\Lambda /(\Delta )) \ra H^{2}(E_K ,\partial E_K ;\Lambda )$ with the
mapping $v  \mapsto ( tV- V' ) \mathfrak{s}(v)$. Therefore, by the above formula of the cup product, we readily see $\mathrm{cBl}(u,v) = (1-t) \psi_l ( (tV- V') \mathfrak{s}(u), v ) $ as required.
\end{proof}

Next,
we will see a corollary.
In general, it is easier to quantitatively compute kernels rather than cokernels. 
Using adjugate matrices, consider the linear map
$$ \kappa: \Lambda^{2g}/(tV- V')\Lambda^{2g} \lra \Ker ( tV- V' )_{\Lambda/\Delta}; \ \ v \longmapsto \mathrm{adj}(tV- V') v .$$
This map is an isomorphism. 
Indeed, if we choose a section $\mathfrak{s}: \Lambda^{2g}/(tV- V')\Lambda^{2g} \ra \Lambda^{2g}$,
the inverse mapping is defined by the map $w \mapsto (tV- V') \mathfrak{s}(w)/\Delta$.
In summary, from Theorem \ref{bbbn24} and Proposition \ref{bbbn2}, we immediately have
\begin{cor}\label{bb1787} The map $\kappa$ gives the isomorphism
$$ \kappa: H_1(E_K , \partial E_K ; \Lambda) =\Lambda^{2g}/(tV- V') \cong \Ker (tV- V')_{\Lambda/\Delta}=
H^1(E_K , \partial E_K ; \Lambda/\Delta)$$
such that, for any $ x,y \in H_1(E_K , \partial E_K ; \Lambda ) $,
$$ \mathrm{Bl}_K ( x,y )= \mathrm{cBl}_K ( \kappa(x), \kappa(y) ) \in \Lambda/ (\Delta) . $$
\end{cor}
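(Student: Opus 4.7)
The plan is to leverage the two formulas already at hand: Theorem \ref{bbbn24} expressing $\mathrm{Bl}_K$ as $(v,w)\mapsto (1-t)\overline{v}(tV-V')^{-1}w'$ in $\Lambda/(\Delta)$ (via the embedding $\Lambda/(\Delta)\hookrightarrow\Q(t)/\Lambda$), and Proposition \ref{bbbn2} expressing $\mathrm{cBl}_K$ as $(u,v)\mapsto (1-t)\psi_l((tV-V')\mathfrak{s}(u),v)$ in $\Lambda/(\Delta)$. The bridge between them is the algebraic identity $(tV-V')\,\mathrm{adj}(tV-V')=\Delta\cdot I$, i.e.\ $\mathrm{adj}(tV-V')=\Delta\cdot(tV-V')^{-1}$ as matrices over $\Q(t)$.

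First I would verify that $\kappa$ is a well-defined isomorphism. This is essentially contained in the paragraph preceding the corollary: the identity above shows that $\mathrm{adj}(tV-V')x$ lies in $\Ker(tV-V')_{\Lambda/\Delta}$, and that the proposed inverse $w\mapsto (tV-V')\mathfrak{s}(w)/\Delta$ is well-defined, because when $w\in\Ker(tV-V')_{\Lambda/\Delta}$ any lift $\mathfrak{s}(w)\in\Lambda^{2g}$ satisfies $(tV-V')\mathfrak{s}(w)\in\Delta\Lambda^{2g}$, so division by $\Delta$ in $\Lambda^{2g}$ is legitimate. A brief check that the two composites are the identity modulo $(tV-V')\Lambda^{2g}$, resp.\ $\Delta$, finishes part one.

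Next I would establish the pairing identity. Given $x\in\Lambda^{2g}/(tV-V')\Lambda^{2g}$, fix a representative $\tilde x\in\Lambda^{2g}$ and take the natural lift $\mathfrak{s}(\kappa(x))=\mathrm{adj}(tV-V')\tilde x$. Then
\[(tV-V')\,\mathfrak{s}(\kappa(x))=\Delta\tilde x \in \Delta\Lambda^{2g},\]
which is exactly the form on which $\psi_l$ is defined. Substituting into Proposition \ref{bbbn2} yields
\[\mathrm{cBl}_K(\kappa(x),\kappa(y))=(1-t)\,\psi_l\bigl(\Delta\tilde x,\ \mathrm{adj}(tV-V')\tilde y\bigr)=(1-t)\,\overline{\tilde x}\,\mathrm{adj}(tV-V')\,\tilde y' \pmod{\Delta}.\]
On the other hand, Theorem \ref{bbbn24} gives $\mathrm{Bl}_K(x,y)=(1-t)\overline{\tilde x}(tV-V')^{-1}\tilde y'$, an element of $\Q(t)/\Lambda$; multiplying numerator and denominator by $\Delta$ and using $\mathrm{adj}(tV-V')=\Delta(tV-V')^{-1}$ shows this equals $\bigl[(1-t)\overline{\tilde x}\,\mathrm{adj}(tV-V')\,\tilde y'\bigr]/\Delta$, which is the image of $\mathrm{cBl}_K(\kappa(x),\kappa(y))\in\Lambda/(\Delta)$ under the inclusion $[f]\mapsto[f/\Delta]$. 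Hence $\mathrm{Bl}_K=\mathrm{cBl}_K\circ(\kappa\otimes\kappa)$.

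The main (and really only) obstacle I expect is keeping the conventions straight: the homological formula of Theorem \ref{bbbn24} uses $(tV-V')^{-1}$ over $\Q(t)$, whereas Proposition \ref{bbbn2} works integrally via a chosen section $\mathfrak{s}$; matching the row/column and bar/transpose conventions in $\psi_l$ so that the resulting scalar literally equals $(1-t)\overline{\tilde x}\,\mathrm{adj}(tV-V')\,\tilde y'$ requires care, but no new ideas beyond the adjugate identity.
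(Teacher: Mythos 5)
Your proposal is correct and follows exactly the route the paper takes: the paper defines $\kappa$ via the adjugate, notes the inverse $w\mapsto (tV-V')\mathfrak{s}(w)/\Delta$, and asserts the pairing identity follows ``immediately'' from Theorem \ref{bbbn24} and Proposition \ref{bbbn2}. You have simply supplied the substitution $\mathrm{adj}(tV-V')=\Delta(tV-V')^{-1}$ and the bookkeeping with the inclusion $[f]\mapsto[f/\Delta]$ that the paper leaves implicit.
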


\subsection{Three Bockstein maps}\label{kkl4}
We further need three Bockstein maps and their properties.
We focus on the case $ M=A=\Lambda/(\Delta ) $.
Consider exact sequences
\begin{equation}\label{oo1} 0 \lra \Lambda \stackrel{\Delta}{\lra} \Lambda \lra \Lambda /(\Delta )\lra 0,
\end{equation}
\begin{equation}\label{oo2} 0 \lra \Lambda^{\rm op} \stackrel{\Delta}{\lra} \Lambda^{\rm op} \lra \Lambda^{\rm op} /(\overline{\Delta} )\lra 0.
\end{equation}
The tensor products over $\Z$ canonically give rise to an exact sequence
$$0 \ra (\overline{\Delta}  \Lambda^{\rm op} \otimes \Lambda ) \oplus (\Lambda^{\rm op} \otimes \Delta \Lambda ) \ra \Lambda^{\rm op} \otimes \Lambda\ra
\Lambda^{\rm op} /(\overline{\Delta} ) \otimes \Lambda/(\Delta ) \ra 0 .$$
Noticing $\overline{\Delta}= \Delta$, we have
\begin{equation}\label{oo3} 0 \ra\frac{(\Delta \Lambda^{\rm op} \otimes \Lambda )\oplus( \Lambda^{\rm op} \otimes \Delta \Lambda)}{ \Delta \Lambda^{\rm op} \otimes \Delta \Lambda } \lra \frac{ \Lambda^{\rm op} \otimes \Lambda }{ \Delta (\Lambda^{\rm op} \otimes \Lambda ) } \lra \frac{\Lambda^{\rm op}}{(\Delta )} \otimes \frac{\Lambda}{(\Delta )}\ra 0 \ \ \ (\mathrm{exact}).
\end{equation}
Denote by $\gamma$ the associated Bockstein map.
Then, using \eqref{oo38}, they induce
$$(\psi_l \oplus \psi_r )_* : H^3 ( E_K, \partial E_K ; \frac{(\Delta \Lambda \otimes \Lambda^{\rm op} )\oplus( \Lambda^{\rm op} \otimes \Delta \Lambda) }{ \Delta \Lambda^{\rm op} \otimes \Delta \Lambda }) \ra H^3 ( E_K, \partial E_K ; \frac{ \Lambda }{( \Delta ) }) . $$
Moreover, let us define the following composite homomorphisms:
\begin{equation}\label{oo389} \Upsilon : H^2( E_K, \partial E_K ; \Lambda^{\rm op}/ (\Delta
) \otimes \Lambda/ (\Delta
) ) \xrightarrow{ \ (\psi_l \oplus \psi_r )_* \circ \gamma \ }
H^3 ( E_K, \partial E_K ; A) \xrightarrow{ \  \bullet \cap [ E_K, \partial E_K]  \ } A, \end{equation}
\[ \Phi : H^2( E_K, \partial E_K ; \Lambda/ (\Delta
) \otimes \Lambda^{\rm op}/ (\Delta
) ) \xrightarrow{\  \bullet \cap \Sigma \ }
\Lambda^{\rm op}/ (\Delta
) \otimes \Lambda/ (\Delta
) \xrightarrow{ \ \psi_0 \ } A, \]
where $\bullet \cap   [ E_K, \partial E_K] $ is the cap-product with the relative fundamental 3-class in $H_3(E_K, \partial E_K ;\Z) $.

In addition, we will explain a Leibniz rule of Bockstein maps, and show Lemma \ref{bb1l8} below.
Let $\alpha, \beta, \gamma$ be the associated Bockstein maps with \eqref{oo1}, \eqref{oo2}, \eqref{oo3}, respectively. Let $C $ be the first term in \eqref{oo3}, and $\nu: (\Delta \Lambda^{\rm op} \otimes \Lambda )\oplus( \Lambda^{\rm op} \otimes \Delta \Lambda^{\rm op} )\ra C $ be the projection.
Then, it follows from \cite[Proposition in p. 451]{S} that
\begin{equation}\label{oo43} \gamma ( u \smile v)= \nu_*( \alpha( u) \smile v +(-1)^{\mathrm{dim}(u)} ( u \smile \beta( v) )) ,
\end{equation}
for $u \in H^*(X;\Lambda /(\Delta ) ), v \in H^*(X;\Lambda^{\rm op} /(\Delta ) ) .$

\begin{lem}\label{bb1l8} Let $\alpha,\beta, \gamma$ be as above.
Take $ u,v \in H^1(E_K , \partial E_K ; \Lambda/\Delta) $. Then,
\begin{equation}\label{oo4344} \langle \gamma (u \smile v), [E_K, \partial E_K ] \rangle = \frac{1+t}{1-t} \langle \beta u \smile  v, [E_K, \partial E_K ] \rangle.
\end{equation}
\end{lem}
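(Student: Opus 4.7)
The plan is to combine the Leibniz rule \eqref{oo43} with the explicit cochain-level cup product formula recalled just before Proposition \ref{bbbn2}, applied on the complex \eqref{oop2}.

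First I would apply \eqref{oo43} with $\dim u = 1$ to obtain, at the cochain level,
\[ \gamma(u \smile v) \;=\; \nu_*\bigl(\alpha(u) \smile v \;-\; u \smile \beta(v)\bigr). \]
Capping with $[E_K, \partial E_K]$ and composing with $(\psi_l \oplus \psi_r)_*$ turns this into a $\Lambda/(\Delta)$-valued identity. Under the natural identification between the Bocksteins associated to \eqref{oo1} and \eqref{oo2} (which agree on the underlying abelian group $\Lambda/(\Delta)$), the first summand on the right is precisely the cohomological Blanchfield term $\langle \beta u \smile v,\, [E_K, \partial E_K]\rangle$ from Proposition \ref{bbbn2}. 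Hence, using $1 + \tfrac{2t}{1-t} = \tfrac{1+t}{1-t}$, it suffices to show that the second summand equals $-\tfrac{2t}{1-t}$ times the first.

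To establish this ratio, I would work in the model \eqref{oop2}: represent $u$ and $v$ as $1$-cocycles in $(\Lambda/\Delta)^{2g} \oplus \Lambda/(\Delta)$. Since $\Delta(1) = \pm 1$, the element $1-t$ is invertible in $\Lambda/(\Delta)$, so the cocycle condition at the last summand forces the second coordinates $u_2, v_2$ to vanish modulo $\Delta$. Lifting via $\mathfrak{s}$, the Bockstein reads $\alpha(u) = ((tV-V')^T \mathfrak{s}(u_1)/\Delta,\, 0,\, 0)$, and substituting into $c^1 \smile c^2 = (1-t) f_1 \cdot g_1'$ expresses the first summand through the Seifert pairing. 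The parallel computation for $u \smile \beta(v)$ uses the Bockstein of the $\Lambda^{\mathrm{op}}$-sequence \eqref{oo2}, which carries the involution $t \mapsto t^{-1}$; here the algebraic identity $(tV-V')^{T} = -t \cdot \overline{tV-V'}$ rewrites the result as a scalar multiple of the first summand, and reading this through $\psi_r$ produces the desired factor $-2t/(1-t)$.

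The hard part will be the sign- and involution-bookkeeping across the two coefficient systems $\Lambda$ and $\Lambda^{\mathrm{op}}$: verifying that graded commutativity (with sign $(-1)^{1 \cdot 2} = +1$), together with the transpose identity above and the relation $1-t^{-1} = -t^{-1}(1-t)$ inherent to the $\Lambda^{\mathrm{op}}$-Bockstein, combines to exactly $-2t/(1-t)$ without any spurious factors. Once this chain-level identity is verified, the lemma follows immediately.
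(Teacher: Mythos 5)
Your overall skeleton is the same as the paper's: apply the Leibniz rule \eqref{oo43} with $\dim u=1$, work in the cochain model \eqref{oop2} with the explicit cup product $c^1\smile c^2=(1-t)f_1\cdot g_1'$, and use the transpose/involution identity for the Seifert matrix (your identity $(tV-V')^{T}=-t\cdot\overline{tV-V'}$ is correct, and so is the observation that invertibility of $1-t$ kills the second coordinate of a $1$-cocycle). The problem is in the two quantitative claims you reduce the lemma to; neither is what the computation you describe actually produces.

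Concretely, the paper converts the $\psi_r$-term via $\psi_{r}\bigl(u,(tV-V')\mathfrak{s}(v)\bigr)=\psi_{l}\bigl(\overline{(tV-V')'}\,\mathfrak{s}(u),v\bigr)=\psi_l\bigl((t^{-1}V'-V)\mathfrak{s}(u),v\bigr)$, so the two Leibniz summands evaluate to $\psi_l\bigl((tV-V')\mathfrak{s}(u),v\bigr)$ and $t\,\psi_l\bigl((tV-V')\mathfrak{s}(u),v\bigr)$; they combine (using sesquilinearity, $\overline{1+t^{-1}}=1+t$) to $(1+t)\,\psi_l\bigl((tV-V')\mathfrak{s}(u),v\bigr)$, and only at the very last step does one compare with $\langle\beta u\smile v,[E_K,\partial E_K]\rangle=(1-t)\,\psi_l\bigl((tV-V')\mathfrak{s}(u),v\bigr)$ to extract the factor $\frac{1+t}{1-t}$. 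In particular the two summands stand in ratio $1:t$, and \emph{neither} summand individually equals the cohomological Blanchfield term --- each is $\frac{1}{1-t}$ or $\frac{t}{1-t}$ times it. Your plan instead asserts that the first summand is $\langle\beta u\smile v,[E_K,\partial E_K]\rangle$ on the nose and that the second is $-\frac{2t}{1-t}$ times the first; the factor $\frac{2t}{1-t}$ appears to be reverse-engineered from $1+\frac{2t}{1-t}=\frac{1+t}{1-t}$ rather than derived, and there is no source for a factor of $2$ anywhere in the transpose computation you propose (that computation yields $t$, and $\frac{2t}{1-t}=t$ only at $t=-1$). If you carry out your own chain-level bookkeeping honestly you will land on the $1:t$ decomposition and then be forced to relocate the $\frac{1}{1-t}$ to the comparison with $\mathrm{cBl}$, i.e.\ to the paper's argument; as written, your two intermediate targets are mutually inconsistent with the identity you plan to use to prove them.
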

\begin{proof}
From the definitions of $ \psi_{r}$ and $ \psi_{l}$, we notice that
\begin{equation}\label{oo9} 
\psi_{r} (u, (tV -V ') \cdot \mathfrak{s}(v) ) = 
\psi_{l} ( \overline{(t V -V ')' } \cdot \mathfrak{s}(u),v).
\end{equation}
By \eqref{oo43}, the left hand side in \eqref{oo4344} is formulated
\[\psi_{l} ( (tV -V') \mathfrak{s}(u) , v) -\psi_{r} \bigl( u, (tV -V' ) \cdot \mathfrak{s} (v) \bigr) , \]
which is computed as
\[ = \psi_{l} ( (tV -V') \mathfrak{s}(u) , v) - \psi_{l} ( (t^{-1} V' -V ) \cdot \mathfrak{s}(u),v )\]
\[ = \psi_{l} ( (tV -V') \mathfrak{s}(u) , \mathfrak{s}(v)) + \psi_{l} ( (V- t^{-1} V' ) \cdot \mathfrak{s}(u),v )\]
\[ = \psi_{l} ( (tV + V -V'-t^{-1} V') \mathfrak{s}(u) , v) \]
\[ = (1+t) \psi_{l} (( tV -V') \mathfrak{s}(u) , v ) .\]
Since $\langle \beta u \smile v, [E_K, \partial E_K ] \rangle$ is
$(1-t) \psi_{l} (( tV -V') \mathfrak{s}(u) , \mathfrak{s}(v) )$ by Proposition \ref{bbbn2}, we have the desired equality.
\end{proof}

\subsection{Final discussion}\label{kkl4}
To prove Theorem \ref{clAl22}, we need a lemma: 
\begin{lem}\label{bb1l98} There is a constant $\alpha_K$
such that $\Phi = \alpha_K \Upsilon$, where $ \Phi $ and $\Upsilon$ are given in \eqref{oo389}. 
\end{lem}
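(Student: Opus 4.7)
The plan is to reduce the scalar identity $\Phi = \alpha_K \Upsilon$ to an explicit cochain-level comparison based on the cellular complex \eqref{oop2}, first on cup products of 1-classes and then extended to all of $H^2$.

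On the subspace generated by cup products $u \smile v$ with $u, v \in H^1(E_K, \partial E_K; \Lambda/(\Delta))$, Lemma \ref{bb1l8} combined with Proposition \ref{bbbn2} already yields
\[ \Upsilon(u \smile v) = (1+t)\,\psi_l\bigl((tV - V')\mathfrak{s}(u),\,v\bigr) \in \Lambda/(\Delta). \]
I would then compute $\Phi(u \smile v) = \psi_0\bigl((u \smile v) \cap [\Sigma]\bigr)$ in a parallel manner. Using the associativity $(u \smile v) \cap [\Sigma] = u \cap (v \cap [\Sigma])$ together with the cellular description of $[\Sigma] \in H_2(E_K, \partial E_K; \Z)$ read off from \eqref{oop2} (as in \cite{T, FP}) and the explicit Seifert-matrix presentation of Theorem \ref{bbbn24} — which also underlies the isomorphism $\kappa$ of Corollary \ref{bb1787} — I expect to obtain an expression of the form
\[ \Phi(u \smile v) = c \cdot \psi_l\bigl((tV - V')\mathfrak{s}(u),\, v\bigr) \]
for some fixed element $c \in \Lambda/(\Delta)$ depending only on $K$ (not on $u$ or $v$). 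Setting $\alpha_K := c/(1+t)$, which is well-defined because $1+t$ either is a unit in $\Lambda/(\Delta)$ or enters only through the combined expression, gives the identity $\Phi(u \smile v) = \alpha_K \Upsilon(u \smile v)$ on cup products.

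The main obstacle — and the technical core of the proof — is promoting this equality from the image of the cup product to the entire group $H^2(E_K, \partial E_K; \Lambda/(\Delta) \otimes \Lambda/(\Delta))$. My plan is to handle this directly at the cochain level: specializing \eqref{oop2} to $R = \Lambda/(\Delta) \otimes \Lambda/(\Delta)$, a general 2-cocycle is a pair in $R^{2g} \oplus R^{2}$ satisfying the cocycle condition, and both $\Phi$ and $\Upsilon$ unfold into bilinear combinations of its coordinates weighted by $V$, $V'$, and the pairings $\psi_l, \psi_r, \psi_0$. Matching the two combinations coefficient-by-coefficient — which is possible because the algebraic shape of a general 2-cocycle mirrors that produced by a cup product of 1-cocycles on the $(tV - V')$-summand of $\partial_2$ — shows that the constant $\alpha_K$ detected on cup products governs the general case. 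The most delicate bookkeeping concerns tracking the involutions that distinguish $\psi_l \oplus \psi_r$ (via the short exact sequence \eqref{oo3}) from $\psi_0 = \bar{x}y$, and this is where I expect most of the calculation to concentrate.
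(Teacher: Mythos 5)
You have a genuine gap at the heart of the argument: the step where you ``expect to obtain'' $\Phi(u\smile v)=c\cdot\psi_l\bigl((tV-V')\mathfrak{s}(u),v\bigr)$ with $c$ independent of $u$ and $v$ is essentially the statement of the lemma itself, and nothing in your proposal forces it. A priori $\Phi(u\smile v)$ and $\Upsilon(u\smile v)$ are just two sesquilinear forms on $H^1\times H^1$, and two such forms need not be proportional; a brute-force cellular computation of $(u\smile v)\cap\Sigma$ produces a bilinear expression governed by some matrix, and you give no reason why that matrix should be a scalar multiple of $tV-V'$. The paper supplies exactly the missing structural input: since $\psi_0$ and $(\psi_l\oplus\psi_r)_*\circ\gamma$ are sesquilinear, both $\Phi$ and $\Upsilon$ factor through $H^2\bigl(E_K,\partial E_K;\Lambda^{\rm op}/(\Delta)\otimes_{\Lambda}\Lambda/(\Delta)\bigr)$, whose coefficient system is trivial and which is additively a single copy of $\Lambda/(\Delta)$. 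Any two $\Lambda$-linear maps out of this cyclic module are multiplications by elements $P$ and $Q$, which yields proportionality on \emph{all} of $H^2$ at once --- this also dissolves what you call the ``main obstacle'' of extending beyond the image of the cup product, a step your coefficient-matching plan cannot carry out precisely because it presupposes the proportionality it is meant to establish.

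Your fallback definition $\alpha_K:=c/(1+t)$ is a second, independent problem: $1+t$ is a unit in $\Lambda/(\Delta)$ only when $\Delta(-1)=\pm1$, which already fails for the trefoil, where $\Lambda/(t^2-t+1,\,1+t)\cong\Z/3$. To divide by $1+t$ you would have to prove that $c$ is divisible by $1+t$ in $\Lambda/(\Delta)$, an extra claim your computation does not address. The paper avoids this by dividing instead by $P=\Upsilon(\text{generator})$ and deducing the invertibility of $P$ from the non-singularity of the Blanchfield pairing. Without the factorization through the cyclic module and without a substitute for that invertibility argument, the proposal does not close.
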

\begin{proof}First, we notice that, if the tensor is defined over $\Lambda$, 
the coefficient $ \Lambda^{\rm op}/ (\Delta
) \otimes_{\Lambda} \Lambda/ (\Delta 
) $ is a trivial coefficient, and is additively isomorphic to $\Lambda/ (\Delta 
) $. Thus, we have an additive isomorphism 
$$ H^2( E_K, \partial E_K ; \Lambda^{\rm op}/ (\Delta
) \otimes_{\Lambda} \Lambda/ (\Delta
) ) \cong  H^2( E_K, \partial E_K ; \Z) \otimes \Lambda/ (\Delta
) \cong \Lambda / (\Delta
) .$$
By the definition of $  \mathcal{Q}_{\psi}$ (see \eqref{kiso}), 
$  \mathcal{Q}_{\psi}$ factors through this second homology.
Thus, the maps $ \Phi$ and $ \Upsilon$ are regarded as multiplications 
of $P $ and $Q $ for some $P , Q \in \Lambda/ (\Delta
)$, respectively.
Here, we notice that $P$ is invertible in $\Lambda/ (\Delta
) $, since, if not so, the Blanchfield pairing is not non-singular. 
Hence, defining $\alpha_K$ by $P^{-1} Q$, we have 
$ \Phi = \alpha_K \Upsilon$ as required.
\end{proof}

\begin{proof}[Proof of Theorem \ref{clAl22}]
Recall the definition of $ \Upsilon$ in \eqref{oo389},
the left hand side in \eqref{oo4344} is equal to $\Upsilon( u \smile v) $.
If $u= \kappa(x) , v=\kappa(y)$, the right hand side in \eqref{oo4344}
equals $\frac{1+t}{1-t} \cdot \mathrm{Bl}_K(x,y)$ by Corollary \ref{bb1787}.
On the other hand, by
the definition of $ \Phi $ in \eqref{oo389},
$\Phi( u \smile v) = \mathcal{Q}_{\psi_0} (u,v)$.
By Lemma \ref{bb1l98}, $ \Phi = \alpha_K \Upsilon$ for some $\alpha_K \in \Lambda/(\Delta)$;
we have
$$ \mathcal{Q}_{\psi_0 } (\kappa(x) ,\kappa(y))= \mathcal{Q}_{\psi_0} (u,v)=\Phi( u \smile v)= \alpha_K\Upsilon( u \smile v)
$$
$$= \alpha_K \frac{1+t}{1-t} \cdot \mathrm{cBl}_K(u,v) = \alpha_K \frac{1+t}{1-t} \cdot \mathrm{Bl}_K(x,y) \in \Z[t^{\pm 1}]/ ( \Delta) ,$$
which is the required equality. 
It remains to show $\overline{\alpha_K}= \alpha_K.$ Indeed, 
since $\mathrm{Bl}_K $ is hermitian and $\mathcal{Q}_{\psi_0} $ is anti-hermitian, 
$ \alpha_K$ must satisfy $\overline{\alpha_K}= \alpha_K.$
\end{proof}

\section{Computation I: small knots and some non-fibered knots}\label{S51}
From this section, we will give the resulting computations of
the Blanchfield pairings and the other pairing $\mathcal{Q}_{\psi_0}$
for some knots; recall the definition $\mathcal{Q}_{\psi_0}$ in Theorem \ref{mainthm1}.
Here, the former pairing is easily computed by Proposition \ref{bbbn2} in terms of Seifert matrices, and
the latter is computed from Theorem \ref{mainthm1} in terms of $X$-colorings.
Here, we use data of the Seifert matrices from KnotInfo \cite{CL}.

We give a list of
the resulting computations of all knots of crossing number $<8$; see Table \ref{tt1}.
Here, if $\alpha_K \neq 1$ in the table, it is not hard to verify that $\alpha_K$ is
not invertible and not an zero divisor in $ \Lambda/\Delta$.

\begin{table}[hbtp]
\centering
\begin{tabular}{ccc}
\hline
Knot type & Alexander polynomial $\Delta$ & $\alpha_K \in \Lambda /(\Delta)$ \\
\hline \hline
$3_1$ & $t^2-t+1 $ & 1 \\
$4_1$ & $t^2-3t+1 $ &1 \\
$5_1$ & $t^4-t^3+t^2-t+1 $ & $t^{-1} +2+ t $ \\
$5_2$ & $2t^2-3t+2 $ & 1 \\
$6_1$ & $2t^2-5t+2 $ & 1 \\
$6_2$ & $t^4-3t^3+3t^2-3t+1 $ & $3 t^{-1}- 7 + 3 t$ \\
$6_3$ & $t^4-3t^3+5t^2-3t+1 $ & $t+t^{-1}$ \\
$7_1$ & $t^6-t^5+t^4-t^3+t^2-t+1 $ & $3t^{-2} - 2 t^{-1} + 4 - 2 t + 3 t^2$ \\
$7_2$ & $3t^2-5t+3 $ & $ 2 t^{-1}- 3 + 2 t$ \\
$7_3$ & $2t^4-3t^3+3t^2-3t+2 $ & $(-3 + 2 t) (-2 + 3 t^{-1})$ \\
$7_4$ & $4t^2-7t+4 $ & 1 \\
$7_5$ & $2t^4-4t^3+5t^2-4t+2 $ & $2(2t^{-1} - 3 + 2t)$ \\
$7_6$ & $t^4-5t^3+7t^2-5t+1 $ & $t^{-1} - 5 + t $\\
$7_7$ & $t^4-5t^3+9t^2-5t+1 $ & $t^{-1} - 4 + t $ \\
\hline
\end{tabular}
\caption{The constants $\alpha_K$ for knots of crossing number $<8$. }
\label{tt1}
\end{table}

Similarly, 
we can compute $\mathcal{Q}_{\psi_0}$ for knots of higher crossing number.
In our experience in the computation, when the Alexander polynomial $\Delta$ is divisible by
a square of a polynomial and $K$ is not fibered, the constant $\alpha_K$ might be a non-trivial divisor of $\Delta$ and not a unit. 
As examples, we give a table:
\begin{table}[hbtp]
\centering
\begin{tabular}{ccc}
\hline
Knot type & Alexander polynomial $\Delta$ & $\alpha_K \in \Lambda /(\Delta)$ \\
\hline \hline
$8_{20}$ & $(t^2-t+1 )^2$ & $t-1+t^{-1}$ \\
$11_{73}$ & $(t^2-t+1 )^2$ & 0 \\
$12_{a0169 }$ & $(2 t^2 - 3 t + 2)^2 $ &$4 (-2 + t) (2 t^{-1} - 3 + 2 t)$\\
$12_{n0057 }$ & $(t^2-t+1 )^2 $ &$(t + t^{-1}) (t - 1 + t^{-1})$\\
$12_{n087 }$ & $(2 t^2 - 3 t + 2)^2 $ &$2t - 3 + 2 t^{-1}$\\
$12_{n0279 }$ & $(t^2-3t+1 )^2 $ & $(t-3+t^{-1}) (3t-7+3t^{-1})$ \\
\hline
\end{tabular}
\caption{The constants $\alpha_K$ for some non-fibered knots. }
\label{table:data_type}
\end{table}
Thus, the Blanchfield pairings of such knots can not be recovered from $ \mathcal{Q}_{\psi_0} $'s. 
As these tables imply, it might be a difficult problem to
give a formula to determine $\alpha_K$ for every knot $K$.

\section{Computation II: Pretzel knots}\label{S52}
We will focus on the Pretzel knot as in Figure \ref{ftf}.
Take odd numbers $p,q,r \in \Z$ such that $ p=2\ell +1, q= 2m+1, r=2n+1$.
Then, the Alexander polynomial is known to be
$\Delta= \frac{1}{4} \bigl( ( pq + q r + rp)(t^2-t+1)+t^2+t+1\bigr) $.
By observing the discriminant, $\Delta$ can not be any square of some polynomial. 
The purpose of this section is to show the following: 
\begin{thm}If $K$ is the Pretzel knot $P(p,q,r)$, then $K$ satisfies the assumption of Theorem \ref{clAl22}, and $\alpha_K=1$.
\end{thm}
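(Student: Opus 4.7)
The plan is to compute both sides of the formula in Theorem~\ref{clAl22} explicitly for one convenient pair of classes and compare. First, the hypothesis of Theorem~\ref{clAl22} requires only $\Delta(1) = \pm 1$, which holds automatically for any knot Alexander polynomial and can also be read off directly from the displayed formula for $\Delta$; in particular, $(1+t)(1-t)^{-1}$ is a well-defined element of $\Lambda/(\Delta)$. By Lemma~\ref{bb1l98}, the constant $\alpha_K$ is a single element of $\Lambda/(\Delta)$, so it suffices to verify the formula of Theorem~\ref{clAl22} on one pair $(x,y)$ whose Blanchfield value $\mathrm{Bl}_K(x,y)$ is a non-zero-divisor; such a pair is available because the discriminant argument cited in the paper ensures that $\Delta$ is not a polynomial square, so $\Lambda/(\Delta)$ is sufficiently regular.

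For the Blanchfield side, I will use the standard genus-one Seifert surface $F$ of $P(p,q,r)$, a disc with three half-twisted bands, and its explicit $2 \times 2$ Seifert matrix $V$ with entries depending on $\ell, m, n$ (where $p=2\ell+1$, $q=2m+1$, $r=2n+1$). Proposition~\ref{bbbn2} and Corollary~\ref{bb1787} then yield closed-form expressions for $\mathrm{Bl}_K$ on the canonical generators $e_1, e_2 \in \Lambda^2/(tV-V')\Lambda^2$ in terms of the adjugate of $tV - V'$ and the factor $(1-t)$.

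For the cohomological side, I will take the standard pretzel diagram $D$ with its three twist regions, set $X = \Lambda/(\Delta)$, and describe the reduced $X$-colorings of $D$. The Alexander quandle iteration $(a,b) \mapsto (b,\; t(a-b)+b)$ inside each twist region is $\Lambda$-linear with eigenvalues $1$ and $-t$, so the colors at the $k$-th crossing inside a twist region are explicit combinations of the entry colors with factors $(-t)^{k-1}$. The closing relations at the top and bottom of the pretzel then cut out $\mathrm{Col}^{\mathrm{red}}_X(D)$ as exactly the module presented by $tV-V'$, matching the isomorphism of Theorem~\ref{mainthm1}. Selecting colorings $\mathcal{C}, \mathcal{C}'$ of $D$ that correspond to $\kappa(e_1), \kappa(e_2)$ under this identification, I compute the weight sum \eqref{aaa}: within each twist region both $\mathcal{C}(\alpha_\tau)-\mathcal{C}(\beta_\tau)$ and $\mathcal{C}'(\beta_\tau)$ admit closed forms in the entry colors, so the sub-sum collapses to a geometric-series expression in $(-t)^{-1}$ involving only the entry colors.

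The main obstacle is the bookkeeping in the last step: assembling the three twist-region contributions, tracking the crossing signs $\epsilon_\tau$ and the band orientations that govern them, and recognizing the resulting polynomial as precisely $(1+t)(1-t)^{-1}\mathrm{Bl}_K(e_1, e_2)$. Once this identification is carried out, Theorem~\ref{clAl22} immediately forces $\alpha_K = 1$.
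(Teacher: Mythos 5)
Your plan is essentially the paper's proof: it likewise computes the Blanchfield side from the explicit genus-one Seifert matrix via Proposition \ref{bbbn2} (giving Proposition \ref{kk6}) and the cohomology side from the twist regions of the standard pretzel diagram via the weight sum (Lemma \ref{l1}, giving Proposition \ref{kk7}), and then compares the two. The only difference is that the paper evaluates both pairings on a full basis of two explicit colorings and matches the resulting $2\times 2$ matrices, rather than reducing to a single pair with non-zero-divisor Blanchfield value as you propose.
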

\noindent
In other words, $ \mathrm{Bl}_K $ is completely recovered from the cohomology pairing.
Since this theorem immediately follows from a comparison between Propositions \ref{kk6} and \ref{kk7} below, we will show the propositions.

\

\

\begin{figure}[htpb]
\begin{center}
\begin{picture}(10,60)
\put(-159,61){\large $\alpha^u $}
\put(-99,61){\large $\beta^u $}
\put(-55,61){\large $\gamma^u $}
\put(-159,-13){\large $\alpha_b $}
\put(-98,-15){\large $\beta_b$}
\put(-60,-13){\large $\gamma_b $}

\put(88,51){\large $\alpha_1 $}
\put(87,-2){\large $\alpha_m $}
\put(88,24){\large $\alpha_i $}
\put(91,10){\normalsize $\vdots $}
\put(91,38){\normalsize $\vdots $}

\put(-351,26){\pc{pretzelodd}{0.58104}}
\put(96,26){\pc{mutorus3}{0.2473104}}
\end{picture}

\

\end{center}\caption{\label{ftf} The Pretzel knot $P(p,q,r)$ and the $T_{m,n}$-torus link with labeled arcs. Here, the boxes in the left hand side mean $p$-, $q$-, $q$-twists, respectively.}
\end{figure}
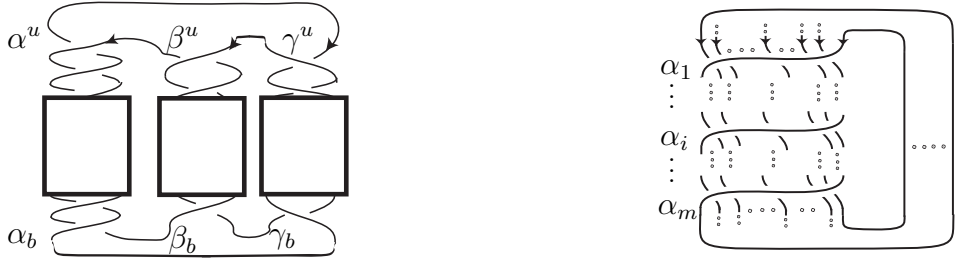

For the purpose,
one computes the Blanchfield pairing $\mathrm{Bl}_{K} $.
According to \cite[Example 6.9]{Lic},
we can choose the Seifert matrix of the form $V= \frac{1}{2}\left(
\begin{array}{cc}
p+q & q +1 \\
q-1 & q+r
\end{array}
\right)$.
\begin{prop}\label{kk6}
The kernel $\Ker_{\Lambda/\Delta} (tV- V') \subset ( \Lambda/\Delta )^2 $ is generated by two elements
$$((1 + m + n) ( t-1), t + m t-m) , \ \ \ \ w= ( m t -1 - m , (1 + \ell + m) (t-1 )) . $$
Furthermore, we have
$$ \left(
\begin{array}{cc}
\mathrm{cBl}_K( v,v) & \mathrm{cBl}_K( v,w) \\
\mathrm{cBl}_K( w,v) & \mathrm{cBl}_K( w,w)
\end{array}
\right) = (1-t^{-1})\left(
\begin{array}{cc}
(1-t) (1 + m + n)& (-1 - m + m t) \\
(-m + t + m t)&(1-t) (1 + m + \ell)
\end{array}
\right) . $$
\end{prop}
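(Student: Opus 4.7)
The plan is to apply Proposition \ref{bbbn2} in the case $2g=2$, with the Seifert matrix
$V = \begin{pmatrix} \ell+m+1 & m+1 \\ m & m+n+1 \end{pmatrix}$
of $P(p,q,r)$ read off from the statement. As preliminary data I would first spell out
$tV - V' = \begin{pmatrix} (\ell+m+1)(t-1) & (m+1)t - m \\ mt - (m+1) & (m+n+1)(t-1) \end{pmatrix}$
and verify by direct expansion that its determinant equals the stated Alexander polynomial $\Delta$.

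For the first assertion, the natural input is the universal identity $(tV-V')\cdot\mathrm{adj}(tV-V') = \Delta \cdot I_2$, which immediately places the two columns of $\mathrm{adj}(tV-V')$ inside $\Ker_{\Lambda/\Delta}(tV-V')$. A direct check (with suitable signs) then identifies the stated $v$ and $w$ with these two columns. To see that $v$ and $w$ actually generate the kernel, rather than merely sitting inside it, I would observe that the cokernel $\Lambda^2/(tV-V')\Lambda^2$ is the Alexander module $H_1(E_K;\Lambda)$, and that on $(\Lambda/\Delta)^2$ the kernel of $tV-V'$ coincides with $\mathrm{adj}(tV-V')\cdot \Lambda^2/\Delta\Lambda^2$; both sides are visibly generated by two elements.

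For the matrix of pairing values, I apply the formula
$\mathrm{cBl}_K(a,b) = (1-t)\,\psi_l\bigl((tV-V')\,\mathfrak{s}(a),\, b\bigr)$
from Proposition \ref{bbbn2} to each ordered pair $(a,b) \in \{v,w\}^2$. The routine is: lift $a$ to $\Lambda^2$ via $\mathfrak{s}$, compute $(tV-V')\mathfrak{s}(a) \in \Delta\Lambda^2$ (by the kernel property), divide by $\Delta$ to extract $x \in \Lambda^2$, and then evaluate $\psi_l$ as $\overline{x}'\, b$. Collecting the four outputs and extracting the common factor $(1-t^{-1})$ should produce the displayed $2\times 2$ matrix.

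The main obstacle is purely computational. The bulk of the work lies in reducing products such as $((\ell+m+1)(t-1))\cdot((m+n+1)(t-1))$ and $((m+1)t-m)(mt-(m+1))$ modulo $\Delta$, aided by the explicit formula $\Delta = ((\ell+m+1)(m+n+1) - m(m+1))(t-1)^2 + t$ obtained above. A useful internal sanity check is the hermitian symmetry $\mathrm{cBl}_K(v,w) = \overline{\mathrm{cBl}_K(w,v)}$, which one reads off directly from the proposed $(1,2)$ and $(2,1)$ entries of the target matrix.
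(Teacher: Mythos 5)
Your overall strategy coincides with the paper's: the paper's entire proof of this proposition is the single sentence that it follows from Proposition \ref{bbbn2} by a Mathematica computation with the Seifert matrix $V=\frac12\left(\begin{smallmatrix}p+q & q+1\\ q-1& q+r\end{smallmatrix}\right)=\left(\begin{smallmatrix}\ell+m+1 & m+1\\ m& m+n+1\end{smallmatrix}\right)$, so filling in the adjugate/$\psi_l$ bookkeeping as you do is exactly the intended route, and your preliminary data ($tV-V'$, $\det(tV-V')=((\ell+m+1)(m+n+1)-m(m+1))(t-1)^2+t$, the hermitian-symmetry sanity check on the target matrix) all check out.

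There is, however, a genuine gap in the step you wave through as ``a direct check (with suitable signs) then identifies the stated $v$ and $w$ with these two columns.'' The columns of $\mathrm{adj}(tV-V')$ are $\bigl((m+n+1)(t-1),\, m+1-mt\bigr)$ and $\bigl(m-(m+1)t,\,(\ell+m+1)(t-1)\bigr)$, and no choice of signs turns these into the stated $v=\bigl((m+n+1)(t-1),\,(m+1)t-m\bigr)$ and $w=\bigl(mt-m-1,\,(\ell+m+1)(t-1)\bigr)$: the off-diagonal entries differ by the involution $t\mapsto t^{-1}$ (up to a unit), not by a sign. The discrepancy is not cosmetic. For $P(1,1,1)$ (the trefoil, $\ell=m=n=0$) one has $tV-V'=\left(\begin{smallmatrix}t-1&t\\-1&t-1\end{smallmatrix}\right)$, whose kernel modulo $t^2-t+1$ is the cyclic module generated by the adjugate column $(t-1,1)$, and the stated $v=(t-1,t)$ does not lie in it, since $(tV-V')(t-1,t)'=(2t^2-2t+1,\ t^2-2t+1)\equiv(-1,-t)\neq 0$. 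So your planned direct check would actually fail, and you would be forced to decide whether the proposition's generators carry an implicit transpose/sign convention (they are, up to flipping the off-diagonal signs, the \emph{rows} of the adjugate, i.e., generators of a left kernel, or equivalently the kernel for the Seifert matrix conjugated by $\mathrm{diag}(1,-1)$) or whether the displayed generators need correcting to match $\Ker(tV-V')$ as defined before Proposition \ref{bbbn2}. Until that is resolved, the subsequent pairing computation is being evaluated on vectors for which $(tV-V')\mathfrak{s}(a)$ need not be divisible by $\Delta$, so the recipe ``divide by $\Delta$ to extract $x$'' does not go through as written.
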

Thanks to Proposition \ref{bbbn2}, the proof can be easily obtained by the help of a computer program of mathematica; we omit the details.

On the other hand, we will compute the cohomology pairing $\mathcal{Q}_{\psi_0}$.
\begin{prop}\label{kk7}
Let $X$ and $A$ be $\Lambda/\Delta $ in usual.
Consider the submodule, $\mathcal{K} $, of $ ( \Lambda/\Delta )^2 $ independently generated by two elements
$$v=( (1 + m + n) ( t-1), t + m t-m) , \ \ \ \ w= ( m t - 1-m, (1 + \ell + m) (t-1)) \in ( \Lambda/\Delta )^2. $$
Then, there exists a $\Lambda$-isomorphism $ \theta: \mathcal{K} \cong \mathrm{Col}_X^{\rm red}( D)$
such that
\[ \left(
\begin{array}{cc}
\mathcal{Q}_{\psi_0}( \theta(v),\theta(v)) & \mathcal{Q}_{\psi_0}( \theta(v),\theta(w)) \\
\mathcal{Q}_{\psi_0}( \theta(w),\theta(v)) & \mathcal{Q}_{\psi_0}( \theta(w),\theta(w) )
\end{array}
\right) = (1+t^{-1})\left(
\begin{array}{cc}
(1-t) (1 + m + n)& (-1 - m + m t) \\
(-m + t + m t)&(1-t) (1 + m + \ell)
\end{array}
\right). \]
\end{prop}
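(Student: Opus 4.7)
The plan is to invoke the diagrammatic machinery of Theorem~\ref{mainthm1} applied to the standard diagram $D$ of $P(p,q,r)$ in the left of Figure~\ref{ftf}, so that the cohomology pairing $\mathcal{Q}_{\psi_0}$ is computed as an explicit weight sum over the $p+q+r$ crossings of $D$. The basic idea is to parametrize $\mathrm{Col}_X^{\mathrm{red}}(D)$ by the colors $\alpha^u,\beta^u,\gamma^u,\alpha_b,\beta_b,\gamma_b$ of the six ``outer'' arcs in the figure, identify this with $\mathcal{K}$ via $\theta$, and then carry out the weight-sum computation by resolving the interior of each twist box.

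First I would use the fact that, within a twist box of $2k+1$ crossings whose top two arcs are colored $a,b\in X=\Lambda/(\Delta)$, the operation $x\lhd y=t(x-y)+y$ iterates to give a closed-form recurrence for the coloring of the $i$-th intermediate arc in terms of $a,b$ and $t^i$; because $p,q,r$ are odd, the two strands swap at the bottom. Writing out the resulting matching conditions at the top and bottom of the three boxes produces exactly the linear system presented by the matrix $tV-V'$ with $V=\tfrac12\begin{pmatrix}p+q & q+1\\ q-1 & q+r\end{pmatrix}$, as used in Proposition~\ref{kk6}. Quotienting by the diagonal then yields a canonical $\Lambda$-isomorphism $\theta:\mathcal{K}\xrightarrow{\cong}\mathrm{Col}_X^{\mathrm{red}}(D)$, defined by declaring that the coordinates of an element of $\mathcal{K}\subset(\Lambda/\Delta)^2$ are precisely the ``pre-twist'' color differences at the tops of the first two boxes.

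Next I would plug the colorings $\theta(v),\theta(w)$ into the weight-sum formula
\[ \mathcal{Q}_{D,\psi_0}(\theta(\xi),\theta(\eta))=\sum_{\tau}\epsilon_{\tau}\,\psi_0\!\bigl(\theta(\xi)(\alpha_{\tau})-\theta(\xi)(\beta_{\tau}),\ \theta(\eta)(\beta_{\tau})(1-t^{-1})\bigr),\]
and evaluate it for each of the four pairs $(\xi,\eta)\in\{v,w\}^{2}$. Inside a single twist box the summand depends on $i$ only through an affine function of $t^{i}$, so the sum over $i=1,\dots,2k+1$ telescopes into a rational expression in $t$ divided by $1-t$; the factor $1-t^{-1}$ in the definition of $\mathcal{Q}_{D,\psi}$ cancels the denominator, leaving a clean polynomial contribution from each box. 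Adding the three contributions (one per box) and simplifying should produce exactly the matrix displayed in the statement, which agrees with the matrix of Proposition~\ref{kk6} up to replacing the overall factor $1-t^{-1}$ by $1+t^{-1}$.

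The main obstacle will be the bookkeeping for the three twist-box sums: keeping the signs $\epsilon_{\tau}$, the sesquilinearity of $\psi_0$, and the $(1-t^{-1})$ weight consistent across the three boxes, and then recognizing the final telescoped expression as the advertised $2\times 2$ matrix. Once this is verified, Proposition~\ref{kk6} gives immediately that $\mathcal{Q}_{\psi_0}(\theta(v),\theta(w))$ and $\mathrm{cBl}_K(v,w)$ differ by the scalar $(1+t^{-1})/(1-t^{-1})=-(1+t)/(1-t)$, which in view of Theorem~\ref{clAl22} and the hermitian/anti-hermitian reconciliation pins down $\alpha_K=1$ as claimed in the theorem above, completing the comparison with the Blanchfield pairing.
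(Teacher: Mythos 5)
Your overall strategy --- parametrize $\mathrm{Col}_X^{\mathrm{red}}(D)$ by the outer arcs of the pretzel diagram, resolve each twist box in closed form, and evaluate $\mathcal{Q}_{\psi_0}$ as a box-by-box weight sum --- is exactly the strategy of the paper's proof (which isolates the single-box computation as a lemma on the $(2,2)$-tangle with a $(2N+1)$-twist). However, there is a concrete error in the key computational step. You assert that inside a twist box the iterated operation $x\lhd y=t(x-y)+y$ makes the $i$-th intermediate color an affine function of $t^{i}$, so that the weight sum over the box telescopes as a geometric series with denominator $1-t$. That is the behaviour of a twist region with \emph{parallel} strands (and is indeed what drives the torus-knot computation in Section 6, where factors $\frac{1-t^{ani}}{1-t^{an}}$ appear). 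For $P(p,q,r)$ with $p,q,r$ odd the two strands in each box are \emph{antiparallel}, and the correct closed form is affine in the index $k$ itself: the paper's Lemma 5.3 gives $\alpha_k\mapsto a+k(1-t)(y-x)+x$, $\beta_k\mapsto a+k(1-t)(y-x)+y$, so the difference $\mathcal{C}(\alpha_\tau)-\mathcal{C}(\beta_\tau)$ is \emph{constant} across the box and the weight sum over a $(2N+1)$-twist contributes a term linear in $N$, namely $-\bigl((\overline{x_1-y_1})(a_2+y_2)+N(\overline{x_1-y_1})(x_2-y_2)\bigr)(1-t^{-1})$, with no geometric series at all. Carrying out the telescoping you describe would produce the wrong matrix.

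A secondary, smaller gap: you claim the matching conditions at the tops and bottoms of the three boxes ``produce exactly the linear system presented by $tV-V'$,'' and you use this both to define $\theta$ and to import Proposition \ref{kk6}. This identification is plausible but not automatic from your parametrization; the paper does not argue this way --- it writes out the three explicit simultaneous equations coming from the box lemma, solves them directly (by computer) to find the two generating solutions $(x_1,y_1)$ and $(x_2,y_2)$ matching $v$ and $w$, and separately verifies that every solution is a linear combination of these two. If you want to keep your route, you need to exhibit the change of basis identifying your six-arc coloring equations with $\ker(tV-V')_{\Lambda/\Delta}$, or else follow the paper and solve the system explicitly. Your closing remark deducing $\alpha_K=1$ from the ratio $(1+t^{-1})/(1-t^{-1})$ concerns the theorem rather than this proposition, and in any case should be checked against the sign conventions, since $(1+t^{-1})/(1-t^{-1})=-(1+t)/(1-t)$.
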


\begin{proof}
For simplicity, we suppose that all of $ p,q,r$ is positive. Since the proofs of other cases can be done in the same way, we omit considering the other cases.

We first notice the following lemma, which can be obtained by definitions:
\begin{lem}\label{l1} Consider the $(2,2)$-tangle with $(2N+1)$-twist, and the labels of the arcs in Figure \ref{ftf2}. Choose $a_1,a_2,x_1,x_2,y_1,y_2 \in X$.
For $j \in \{ 1,2\}$, the assignment $\mathcal{C}_j$
$$ \alpha_k  \longmapsto a_j + k(1-t) (y_j- x_j )+ x_j, \ \ \ \ \ \ \beta_k \longmapsto a_j + k(1-t) ( y_j- x_j )+ y_j $$
defines an $X$-coloring. Moreover,
the weight sum with respect to the $2N+1$ crossings is
\begin{equation}\label{1306}
\mathcal{Q}_{\psi_0}( \mathcal{C}_1, \mathcal{C}_2 )=- \bigl( (\overline{x_1-y_1})(a_2+y_2) + N (\overline{x_1-y_1})(x_2- y_2)\bigr) (1-t^{-1}) .\end{equation}
\end{lem}
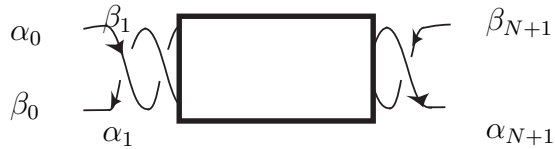
\begin{figure}[htpb]
\begin{center}
\begin{picture}(10,60)
\put(-85,41){\large $\alpha_0 $}
\put(-85,13){\large $\beta_0 $}
\put(-50,46){\large $\beta_1$}
\put(-50,3){\large $ \alpha_1 $}


\put(95,45){\large $\beta_{N+1} $}
\put(95,4){\large $\alpha_{N+1} $}

\put(-281,26){\pc{yokohineri23}{0.68104}}

\end{picture}
\end{center}\caption{\label{ftf2} The $(2,2)$-tangle as a $(2N+1)$-twist. Here the box means a twist.}
\end{figure}

Given an $X$-coloring $\mathcal{C}$ of $P(p,q,r)$,
we put $a,x,y \in X$ such that $ \mathcal{C} (\alpha^u)=a, \ \mathcal{C} (\beta^u)=a+x, \ \mathcal{C} (\gamma^u)=a+y.$ By Lemma \ref{l1}, we have the simultaneous equations
\[\mathcal{C} (\alpha_b)=(\ell+1)(1 - t ) x+a = m (y-x)(1 - t) +y +a,\]
\[ \mathcal{C} (\beta_b)=(1 + m) (y-x)(-1 + t) + x+ a = -n y (1 - t) +a, \]
\[ \mathcal{C} (\gamma_b)=-(1 + n) y (-1 + t) +y +a = \ell (1 - t)x+x+a \in \Lambda/\Delta. \]
Then, by the help of a computer program of Mathematica, we have two solutions
\[ \begin{cases}
x_1 &= (1 + m + n) ( t-1),\\
y_1 & = t + m t-m,
\end{cases} \ \ \ \ \ \mathrm{or} \ \ \ \ \ \
\begin{cases}
x_2 &= m t - 1-m,\\
y_2 &=(1 + \ell + m) (t-1).
\end{cases} \]
Furthermore, it can be verified that every solutions of $(x,y) $ is
a linear sum of the two solutions.
Hence, we have the desired isomorphism $ \theta: \mathcal{K} \cong \mathrm{Col}_X^{\rm red}( D)$.

For $i \in \{ 1,2\}$, let $ \mathcal{C}_i$ be the $X$-coloring associated with the solution $(x_i, y_i)$.
Thanks to Lemma \ref{l1} again,
given two $X$-colorings $ \mathcal{C}_i$ and $ \mathcal{C}_j'$,
the sum $\mathcal{Q}_{\psi_0} (\mathcal{C}_i, \mathcal{C}_j' )$ is equal to
\[ -\bigl( \bar{x_i}(a_j+x_j) + \ell \bar{x_i} x_j + (\bar{x}_i-\bar{y}_i)(a_j+y_j) + m (\bar{x}_i-\bar{y}_i)(x_j- y_j)+ \bar{y}_i a_j + n \bar{y}_i x_j \bigr) (1-t^{-1}) . \]
Using Mathematica for the computation modulo $\Delta$,
we can obtain the desired equality in the $2 \times 2$-matrix.
\end{proof}

\section{Computation III: the torus knot}\label{S53}



We will compute the cohomology
pairing of the $ (m,n )$-torus knot $T_{m,n}$.
Here note the known fact that
the Alexander module is isomorphic to $ \Z [t^{\pm 1}]/( \Delta) $,
where
the Alexander polynomial $ \Delta$ is $ (t^{nm}-1)(t-1)/ \bigl( (t^n-1 )(t^m-1)\bigr)$; see \cite{Rol}.
\begin{prop}\label{exact}
Fix 
$(n,m ,a,b) \in \mathbb{Z}^4$ with $an + bm=1$, and let $K=T_{m,n}.$
Then,
\begin{equation}\label{176} \mathcal{Q}_{\psi_0} (y_1, y_2 ) = \frac{nm (1-t^{-1} )}{(1- t^{ bm })( 1- t^{ an }) } \cdot \bar{y}_1 y_2 \in \Z[t^{\pm 1}]/ (\Delta) , \end{equation}
for $ y_1, \ y_2 \in H^1 (S^3 \setminus T_{m,n} ;\Z [t^{\pm 1}]/( \Delta)) \cong \Z [t^{\pm 1}]/( \Delta). $
\end{prop}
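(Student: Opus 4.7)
Since the Alexander module $H_1(S^3 \setminus T_{m,n}; \Lambda)$ is the cyclic module $\Lambda/(\Delta)$ and $\mathcal{Q}_{\psi_0}$ is sesquilinear, proving \eqref{176} reduces to identifying the single scalar $\mathcal{Q}_{\psi_0}(y_1, y_2)/(\bar{y}_1 y_2) \in \Lambda/(\Delta)$ with the stated expression. My plan, in the spirit of the Pretzel computation (Proposition \ref{kk7}), is to produce an explicit generator of $\mathrm{Col}_X^{\rm red}(D)$ from a braid diagram of $T_{m,n}$ and to evaluate the weight sum \eqref{aaa} on it.

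Concretely, I would take $D$ to be the closure of the $m$-strand braid $(\sigma_1 \sigma_2 \cdots \sigma_{m-1})^n$, with the top $m$ strands labelled as in Figure \ref{ftf}. A direct Wirtinger-type computation with $a \lhd b = t(a-b)+b$ shows that one cycle $\sigma_1 \cdots \sigma_{m-1}$ acts on the color tuple $(z_1, \ldots, z_m)$ by
$$F(z_1, \ldots, z_m) = \bigl(tz_2 + (1-t)z_1,\; tz_3 + (1-t)z_1,\; \ldots,\; tz_m + (1-t)z_1,\; z_1\bigr),$$
so that an $X$-coloring of $D$ is a solution of $F^n z = \pi \cdot z$, where $\pi$ is the cyclic permutation induced by the closure. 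Iterating $F$ through the $n$ cycles, I would produce closed-form expressions for the color of each of the $n(m-1)$ arcs as a $\Lambda$-linear function of a single parameter $y \in \Lambda/(\Delta)$; this parameter realises the isomorphism $\mathrm{Col}_X^{\rm red}(D) \cong \Lambda/(\Delta)$.

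Substituting these arc-colors into \eqref{aaa} and using that every crossing is positive, the pairing becomes a double geometric sum: the $m-1$ crossings within one cycle contribute a factor proportional to $(1-t^m)/(1-t)$, and summing over the $n$ cycles contributes a second factor proportional to $(1-t^{mn})/(1-t^n)$, leaving the coefficient of $\bar{y}_1 y_2$ as a rational expression in $t$, $t^m$, $t^n$, $t^{mn}$. The polynomial identity $(t^{mn}-1)(t-1) = (t^n-1)(t^m-1)\Delta$ then ensures that this expression descends to $\Lambda/(\Delta)$.

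The main obstacle will be matching this element with the asymmetric target $nm(1-t^{-1})/\bigl((1-t^{bm})(1-t^{an})\bigr)$: the natural form of the computed coefficient involves $(1-t^n)^{-1}$ and $(1-t^m)^{-1}$, both well-defined in $\Lambda/(\Delta)$ since $1-t^n$ and $1-t^m$ are coprime to $\Delta$, whereas the target uses the Bezout-twisted exponents $an$ and $bm$. Bridging the two requires the relation $t^{an} \cdot t^{bm} \equiv t \pmod{\Delta}$, which follows from $an + bm = 1$ together with $t^{mn} \equiv 1 \pmod{\Delta}$, combined with a cyclotomic-factor analysis of $\Delta = \prod_{d \mid mn,\, d \nmid m,\, d \nmid n} \Phi_d(t)$; on each quotient $\Lambda/(\Phi_d)$ the identification reduces to a finite check using the primitivity of the relevant $d$-th root of unity. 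I expect this final algebraic bookkeeping, rather than the coloring calculation itself, to be where most of the work lies.
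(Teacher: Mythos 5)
Your overall strategy --- write down explicit colorings of a torus-knot diagram and evaluate the weight sum \eqref{aaa} as a double geometric sum over the crossings --- is the same as the paper's, which colors the diagram in Figure \ref{ftf} by the tuples \eqref{1122} and collapses the double sum over the crossings $\tau_{i,j}$ using $\sum_{i=1}^{m}t^{ani}=0$ and $t^{mn}=1$. The essential difference is that the paper parameterizes the colorings with the Bezout-twisted exponents from the outset (the $i$-th arc is colored by $\frac{1-t^{an(i-1)}}{1-t^{an}}y+\delta$), so the denominators $(1-t^{an})$ and $(1-t^{bm})$ of \eqref{176} appear automatically, whereas you iterate the naive braid action and defer the conversion to a final ``cyclotomic bookkeeping'' step. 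That deferred step is a genuine gap rather than bookkeeping: it is precisely where the specific form of \eqref{176} comes from, and you neither carry it out nor verify your conjectured intermediate factors $(1-t^{m})/(1-t)$ and $(1-t^{mn})/(1-t^{n})$, which at this stage are guesses.

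Two further points would need repair even if the computation were completed. First, $1-t^{n}$ and $1-t^{m}$ are \emph{not} invertible in $\Z[t^{\pm1}]/(\Delta)$ in general: for $T_{2,3}$ one has $1-t^{2}\equiv 2-t \pmod{t^{2}-t+1}$, an element of norm $3$; coprimality of the polynomials over $\Q$ does not give invertibility over $\Z$. So your intermediate expression cannot be handled as a product of separately defined inverses; the whole coefficient must be kept as a single element obtained by cancellation in $\Lambda$ before reducing mod $\Delta$ (this is the content of the paper's remark invoking l'H\^{o}pital's rule, and the same caveat governs the denominators in \eqref{176} itself). Second, the identification $H^{1}(S^{3}\setminus T_{m,n};\Lambda/(\Delta))\cong\Lambda/(\Delta)$ is canonical only up to a unit $c$, and replacing the generator rescales the coefficient of $\bar{y}_1 y_2$ by $\bar{c}c$; since your generator of $\mathrm{Col}^{\rm red}_X(D)$ differs from the one fixed by \eqref{1122}, you would additionally have to check that the discrepancy between your answer and \eqref{176} is of this form for the correct $c$, rather than an arbitrary cyclotomic unit such as $(1-t^{an})/(1-t^{n})$.
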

Remark that the coefficient in \eqref{176} lies in $\Z[t^{\pm 1}]/(\Delta ) $ because of
l'H\^{o}pital's rule in $\Z[t^{\pm 1}]$.
\begin{proof}Let $ X= A= \Z [t^{\pm 1}]/( \Delta)$.
By Theorem \ref{clAl22}, we have $ \mathrm{Col}_X(D) \cong H_1 (S^3 \setminus K ; A ) \oplus X \cong X ^2$;
it is enough to compute the bilinear form $\mathcal{Q}_{\psi } $ with
$\psi (y, z)= \bar{y} z $.

To this end, let us start by examining $ \mathrm{Col}_X(D)$ in more details.
Let $\alpha_1, \dots, \alpha_m $ be the arcs depicted in Figure \ref{ftf}.
Because of the shape of $D$, every coloring in $ \mathrm{Col}_X(D)$ is characterized by
colors of these $m $ arcs.
Hence, we can view $ \mathrm{Col}_X(D)$ as a submodule of $X^m $.
In addition, for $k \in \{ 1,2 \}$, consider elements of the forms
\begin{equation}\label{1122}\vec{y}_{k}= ( \delta_k , \ y_k + \delta_k , \ \frac{1- t^{2 an }}{1- t^{ an }}y_k+ \delta_k , \dots , \ \frac{1- t^{ an (m -1 ) }}{1- t^{ an }}y_k+
\delta_k ) \in X^m, \end{equation}
for some $y_k , \ \delta_k \in X.$ From this view, we can easily see that these elements \eqref{1122} define $X$-colorings.
Further, the first and second components imply that these elements in \eqref{1122} give a basis of $ \mathrm{Col}_X(D) \cong X^2$.
Let $\tau_{i,j}$ be the $j$-th crossing point on the arc $\alpha_i$.
Then, concerning the $X$-coloring $\mathcal{C}$ arising from \eqref{1122}, the colors around $\tau_{i,j}$ as Figure \ref{koutenpn} are formulated as
$$ \bigl( \mathcal{C} (\alpha_{\tau_{i,j}}) , \ \mathcal{C} ( \beta_{\tau_{i,j}}) \bigr)
= \bigl( \frac{1- t^{ bm j }}{1- t^{bm}} t^{an(i- 2)} y_k + \frac{1- t^{ an (i-1) }}{1- t^{an}}y_k + \delta_k , \
\frac{1- t^{ an (i-1) }}{1- t^{an}}y_k + \delta_k \bigr) \in A^2 . $$

Accordingly, we now deal with the 2-form $\mathcal{Q}_{\psi} (\vec{y}_1,\vec{y}_2 )$.
By definition, compute it as
\begin{align*}
\lefteqn{} &
\sum_{i \leq m, \ j \leq n-1} \psi_0 \bigl( \ t^{ an(i - 2) } \frac{1- t^{ bmj }}{1- t^{ bm }} y_1
, \ ( \frac{1- t^{ an(i-1) }}{1- t^{an }}y_2 + \delta_2 ) (1-t^{-1}) \ \bigr)
\\
&= \psi_0  \bigl( \ \sum_{j =1 }^{n-1} \frac{1- t^{ bmj }}{1- t^{ bm }} y_1 ,\ \ ( \sum_{i =1}^{ m } \frac{t^{ an } - t^{ an(2-i) }}{1- t^{an }} ) y_2 (1-t^{-1}) \bigr)
\\
&= \psi_0 \bigl( \frac{ 1- t^{ bm(n-1) } -(n-1)(1- t^{ bm} ) }{ (1- t^{ bm })^2} y_1 , \
\frac{ m(1- t^{ an} ) + 1- t^{- amn} }{ (1- t^{ an })^2} t^{an} \cdot y_2 (1-t^{-1}) \bigr)\\
&
=\psi_0  \bigl(\frac{n}{1- t^{ bm }} \cdot y_1, \ t^{ an }\frac{m}{1- t^{ an } }\cdot y_2 (1-t^{-1}) \bigr)
= \frac{nm(1-t^{-1} ) }{(1- t^{ bm })( 1- t^{ an }) } \cdot \bar{y}_1 y_2 \in \Z [t^{\pm 1}]/(\Delta ),
\end{align*}
where the first equality is obtained by the $t$-invariance of $\psi$ and
the equality $ \sum_{i =1}^{ m} t^{an i} =0 $, and
an elementary computation lifted in $\Z[t^{\pm 1}]$ can imply
the third equality by noting $t^{mn}=1 \in X $.
\end{proof}

Finally, we will give a comparison with the Blanchfield pairing of $T_{m,n }. $
The pairing has not since been computed; the essential reason is
the Seifert genus is $ (n-1)(m-1)/2$, i.e.,
it seems impossible to compute the pairing from Theorem \ref{bbbn2}
using the Seifert matrix.
Furthermore, it is a subtle problem whether ${\rm Bl}_{T_{m,n } } $ can be
recovered from $ \mathcal{Q}_{\psi_0}$ or not.
In fact, the coefficients $nm / (1- t^{ bm })( 1- t^{ an }) $ are not units in $\Lambda/(\Delta)$ in many cases; for example, we can easily verify that, if $m$ is even, the coefficient is divisible by $(1+t)^2$, and that if $(m,n)=(3,7)$, $ \Delta(2)= 7\cdot 337 $ and the coefficient modulo $t+2$ is $2^3\cdot 3^2 \cdot 5 \cdot 7^2 \cdot 17. $
Meanwhile, the coefficient has some important information of ${\rm Bl}_{T_{m,n } } $:
for example, Matumoto \cite{Mat}, Kearton \cite{Kea2}, and Litherland \cite{Lit} independently use other technical formula to compute all the local signatures of $T_{m,n}$, and the signatures can be recovered from the form of the coefficient.

\appendix
\section{Universality of Alexander quandle cocycle invariants.}\label{999}
The paper \cite{CJKLS} constructed a knot invariant from quandle cocycles.
However, the invariant was defined in a combinatorial way without topological meanings.
This appendix reviews the invariant, and gives a topological meaning in the Alexander case, as an application of Theorems \ref{clAl22} and \ref{mainthm1}.

For the purpose, we first briefly review the invariant,
supposing that the reader has read Sections 2--4. 
Let $X$ be a $\Z[t^{\pm 1}]$-module, which is regarded as an Alexander quandle.
Further, given an abelian group $A$, we suppose a map $\phi: X^2 \ra A$ satisfying the equality
\begin{equation}\label{kis22o} \phi(x , z )- \phi(y , z ) - \phi( x \lhd y , z )+ \phi( x \lhd z , y \lhd z) =0, \end{equation}
\[ \phi (x,x)=0, \]
for any $x,y,z \in X$.
Such a map $\phi$ is called {\it a quandle 2-cocycle} \cite{CJKLS}.
Then, in analogy of \S \ref{Ss3}, 
let us define a map
$$\mathcal{I}_{\Phi} : \mathrm{Col}_X (D) \lra A; \ \ \ \ \mathcal{C} \longmapsto \sum_{\tau} \epsilon_{\tau} \cdot \phi( \mathcal{C}(\alpha_{\tau}), \ \mathcal{C}(\beta_{\tau})),$$
where $\tau $ ranges over all the crossings of $D$, and
the symbols $\alpha_{\tau} , \ \beta_{\tau} $ are the arcs and $\epsilon_{\tau} \in \{ \pm 1\}$ is the sign of $\tau$ according to Figure \ref{koutenpn}.
Then, it is known \cite{CJKLS} that, thanks to \eqref{kis22o},
the map $\mathcal{I}_{\Phi}$ is independent of the choice of $D$ by \eqref{kis22o};
then, $\mathcal{I}_{\Phi}$ is called {\it the quandle cocycle invariant}.
For example, given an additive homomorphism $\psi:X^2 \ra A$ satisfying $\psi (tx, ty)=\psi (x,y)$ for any $x,y \in X$, 
we can easily verify that the map $\phi_{\psi}$ defined by $\phi_{\psi}(x,y)= \psi (x-y,y-yt^{-1}) $ satisfies \eqref{kis22o},
and, by definitions, that the associated invariant $\mathcal{I}_{\Phi} $
is equal to
the restricted 2-form $\mathcal{Q}_{\psi} \circ \bigtriangleup$, where $\bigtriangleup $ is the diagonal map $ \mathrm{Col}_X (D)\ra \mathrm{Col}_X (D)^2$ and $\mathcal{Q}_{\psi}  $ with $X=M=M'$ is defined in \eqref{kiso}.




Similar to Theorem \ref{clAl22}, we will show the $S$-equivalence and a universality of the quandle 2-cocycle invariants.

\begin{thm}\label{clA21l31}
If two knots $K$ and $ K'$ are $S$-equivalent, then
for every Alexander quandle $X$ and every quandle 2-cocycle $\phi: X^2 \ra A$,
the associated cocycle invariants $\mathcal{I}_{\Phi} $ and $\mathcal{I}_{\Phi}' $ are equivalent.

Furthermore, for a knot $K$, there is an Alexander quandle $X_0$ and a bilinear map $\psi_{X_0} : X_0 \times X_0 \ra X_0$ such that, for any Alexander quandle $X$, any quandle 2-cocycle $\phi: X^2 \ra A$ and any $X $-coloring $\mathcal{C}$ of $K$,
there are an $X_0$-coloring $ \mathcal{C}_0$ and an additive homomorphism
$\mathcal{P}_{\phi}: X_0 \ra A$ such that $ \mathcal{P}_{\phi} \bigl( \mathcal{Q}_{ \psi_{X_0}} ( \mathcal{C}_0, \mathcal{C}_0) \bigr) = \mathcal{I}_{ \Phi} (\mathcal{C} ) $.
\end{thm}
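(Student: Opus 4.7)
The plan is to reduce Theorem~\ref{clA21l31} to the cohomology-pairing results of Theorems~\ref{cl022} and \ref{mainthm1}, via a Mochizuki-type identification of quandle $2$-cocycles on Alexander quandles with bilinear forms.

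First I would recall, from the paragraph preceding the theorem, that every $\Z$-bilinear $t$-invariant form $\psi : X\times X \to A$ yields a quandle $2$-cocycle $\phi_\psi(x,y)=\psi(x-y,y-yt^{-1})$ whose associated cocycle invariant equals $\mathcal{Q}_{\psi}\circ\bigtriangleup$. The crucial algebraic input is then: on any Alexander quandle $X$, every quandle $2$-cocycle $\phi:X\times X \to A$ is cohomologous to some $\phi_\psi$, so that $\psi\mapsto[\phi_\psi]$ surjects onto $H_Q^2(X;A)$. Since $\mathcal{I}_\Phi$ depends only on the cohomology class of $\phi$ and vanishes on coboundaries (standard from \cite{CJKLS}), this allows me to replace $\mathcal{I}_\Phi(\mathcal{C})$ by $\mathcal{Q}_{\psi}(\mathcal{C},\mathcal{C})$, from which the $S$-equivalence assertion is immediate by Theorem~\ref{cl022}.

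For the universality statement, I would set $X_0 = \Lambda/(\Delta_K)$, viewed as an Alexander quandle, with $\psi_{X_0}(x,y)=\bar x y$. Theorem~\ref{mainthm1} identifies $\mathrm{Col}_X^{\mathrm{red}}(D)$ with $H^1(E_K,\partial E_K;X) \cong \Hom_\Lambda(H_1(E_K;\Lambda),X)$, and since $H_1(E_K;\Lambda)$ is a quotient of $X_0$, every reduced $X$-coloring factors as $\mathcal{C}= f_\mathcal{C}\circ \mathcal{C}_0$ for a tautological $X_0$-coloring $\mathcal{C}_0$ and a $\Lambda$-homomorphism $f_\mathcal{C}: X_0 \to X$; the diagonal summand is absorbed into an appropriate translate of $\mathcal{C}_0$. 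Naturality of \eqref{aaa} in the coefficient module then gives
\[ \mathcal{I}_\Phi(\mathcal{C}) \ = \ \mathcal{Q}_{\psi}(f_\mathcal{C}\mathcal{C}_0, f_\mathcal{C}\mathcal{C}_0) \ = \ \mathcal{Q}_{\psi\circ(f_\mathcal{C}\times f_\mathcal{C})}(\mathcal{C}_0,\mathcal{C}_0), \]
and the pulled-back sesquilinear form on $X_0$ factors through $\psi_{X_0}$ as $\mathcal{P}_\phi\circ \psi_{X_0}$ for a unique additive $\mathcal{P}_\phi : X_0 \to A$, delivering the required identity.

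The hard part will be the cohomological reduction $[\phi]=[\phi_\psi]$ for arbitrary Alexander quandles. For irreducible Alexander quandles over $\F_p$ this is the content of Mochizuki's computation of $H^2_Q$, but in the generality required here I would restrict to the finitely generated $\Lambda$-submodule of $X$ actually hit by $\mathcal{C}$, decompose it via the structure theorem for $\Lambda$-modules, and verify that any residual Frobenius-type ``exotic'' cocycles evaluate trivially on colorings coming from a knot, because such colorings factor through $\Lambda/(\Delta_K)$ where $1-t$ is invertible. Handling the diagonal constant shift $c\in X$ is a routine bookkeeping exercise using $\phi(x,x)=0$ together with the cocycle identity \eqref{kis22o}.
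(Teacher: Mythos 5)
Your overall strategy (reduce a general quandle $2$-cocycle to a bilinear one and then invoke the $S$-equivalence of $\mathcal{Q}_{\psi}$) has the right flavor, but the step you yourself flag as ``the hard part'' is a genuine gap, and it is a harder statement than what is actually needed. You want every quandle $2$-cocycle $\phi$ on an Alexander quandle to be \emph{cohomologous} to some $\phi_{\psi}$, i.e.\ surjectivity of $\psi \mapsto [\phi_{\psi}]$ onto $H^2_Q(X;A)$; this is not available in the required generality (the $\Ext$-term in universal coefficients and Mochizuki-type non-bilinear cocycles are exactly the obstruction you would have to kill, and your sketch for doing so is not a proof). The paper sidesteps this entirely by arguing \emph{homologically}: after using Joyce's representability theorem to reduce to $X = H_1(S^3\setminus K;\Z[t^{\pm1}])$ (so that $(1-t)X=X$), it invokes Clauwens' computation $H_2^Q(X;\Z)\cong G_X := X\otimes_{\Z}X/\{x\otimes y-(ty)\otimes x\}$ and the standard fact that $\mathcal{I}_{\Phi}$ factors as $P_{\phi}$ applied to the fundamental class in $H_2^Q(X;\Z)$. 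Since the universal $G_X$-valued invariant is itself $\mathcal{Q}_{\psi_{\rm uni}}\circ\bigtriangleup$ for the tautological form $\psi_{\rm uni}(x,y)=[x\otimes y]$ (which does satisfy $\psi(x,y)=\psi(ty,x)$), only the Kronecker-dual part of $H^2_Q$ ever matters, and no surjectivity onto all of $H^2_Q(X;A)$ is required. You should restructure your argument around the fundamental class and $H_2^Q$ rather than around cohomologous representatives.

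The universality half also has a concrete error: you take $X_0=\Lambda/(\Delta_K)$ and assert that $H_1(E_K;\Lambda)$ is a quotient of $X_0$. It is annihilated by $\Delta$, hence a $\Lambda/(\Delta)$-module, but it is a quotient of $\Lambda/(\Delta)$ only when the Alexander module is cyclic; for knots with non-cyclic Alexander module (e.g.\ several entries with $\Delta=(t^2-t+1)^2$ in Table \ref{table:data_type}) your tautological $X_0$-coloring $\mathcal{C}_0$ does not exist and the factorization $\mathcal{C}=f_{\mathcal{C}}\circ\mathcal{C}_0$ fails. The correct universal object, as in the paper, is $X_0=H_1(S^3\setminus K;\Z[t^{\pm1}])$ itself, through which all colorings factor by representability. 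Finally, your claim that the pulled-back form on $X_0$ factors through $\psi_{X_0}$ via a \emph{unique} additive $\mathcal{P}_{\phi}$ is unjustified: the universal receptacle for such forms is $G_{X_0}$, not $X_0$, and one must pass through $\psi_{\rm uni}$ and the homomorphism $G_{X_0}\ra A$ induced by $\phi$ to obtain $\mathcal{P}_{\phi}$.
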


In conclusion, this theorem implies that every 2-cocycle invariants from Alexander quandles
can be described from the Blanchfield pairing $ \mathrm{Bl}_K $ in principle.
While the cocycle invariants are diagrammatically defined,
this theorem implies a topological interpretation of the 2-cocycle invariants in the sense of cohomology pairings $\mathcal{Q}_{\psi}$.
\begin{proof}[Proof of Theorem \ref{clA21l31}]
As the first step, we claim that, for any bilinear function $\psi: X^2 \ra A$ satisfying $\psi(x,y)=\psi(ty, x)$, the associated bilinear form $\mathcal{Q}_{\phi_\psi} $ is an $S$-equivalent knot invariant.
Notice that $\psi$ satisfies $ \psi (x,y)=\psi(ty, x)= \psi(tx,ty)$. Therefore,
in the same way as the proof of Theorem \ref{cl022}, the claim can be easily shown.

We will deal with any quandle 2-cocycle $\phi: X^2 \ra A$.
According to \cite[Theorem 17.3]{Joy} concerning 
``Abelianization of the knot quandle", 
for any Alexander quandle $X$, there is a functorial $\Z[t^{\pm 1}]$-module isomorphism
$$ \mathrm{Col}_X (D) \cong \mathrm{Hom}_{\Z[t^{\pm 1}] \textrm{-mod}} \bigl( H_1( S^3 \setminus K;\Z[t^{\pm 1}]), \ X \bigr) \oplus X . $$
In other words, $\mathrm{Col}_X (D)$ is representable by the Alexander module $H_1( S^3 \setminus K;\Z[t^{\pm 1}])$.
Hence, by the universality, we may assume $X= H_1( S^3 \setminus K;\Z[t^{\pm 1}])$ hereafter.
In particular, $(1-t)X=X$.

Further,
consider ``the quandle second homology $H_2^Q(X;\Z)$" defined in \cite{CJKLS},
which is isomorphic to the quotient $\Z$-module
$$ G_X:= X \otimes_{\Z} X / \{ x \otimes y - (t y) \otimes x \ | \ x, y \in X\ \}.$$
This result is essentially due to Clauwens \cite{Cla} (see also \cite[\S 5]{Nos1} or \cite{BKMNP} for the proof). 
Further, consider a homomorphism $ \psi_{\rm uni}: X \otimes X \ra G_X$ which sends $ (x,y)$ to $ [x \otimes y]$.
Then, it is known (see, e.g., \cite{CJKLS,Nos1}),
every cocycle invariant $ \bigtriangleup \circ \mathcal{Q}_{\psi}$ factors through
the homology $H_2^Q(X;\Z)$. Precisely,
there is an additive homomorphism $P_{\psi}: G_X \ra A$ such that $ \mathcal{Q}_{ \psi_{\rm uni}}\circ \bigtriangleup = P_{\psi} (\mathcal{Q}_{ \psi} \circ \bigtriangleup) $ for any link $L$.
By the discussion in the first paragraph, $ \mathcal{Q}_{ \psi_{\rm uni}} $ is $S$-equivalent; hence, so is $\mathcal{Q}_{ \psi}$ for any quandle 2-cocycle $\psi.$

To show the latter part, let $X_0$ be $ H_1( S^3 \setminus K;\Z[t^{\pm 1}]) $ as above.
Since $X_0$ is finitely generated over $\Lambda$, we can make $X_0$ into a $\Lambda$-algebra. Then, we let $\psi_{X_0} :X_0\times X_0 \ra X_0$ send $(x,y) \mapsto \bar{x}y $, which factors through $G_{X_0}$ via $\psi_{\rm uni}$.
Therefore, by the discussion in the previous paragraph, any cocycle invariant $\mathcal{I}_{\Phi} $ is derived from $\mathcal{I}_{\Phi_{X_0}} = (\mathcal{Q}_{ \psi_{\rm uni}} \circ \bigtriangleup) $.
This means the desired statement.
\end{proof}

\subsection*{Acknowledgment}
The author sincerely expresses his gratitude to Anthony Conway for giving him valuable comments and many questions.
The work was partially supported by JSPS KAKENHI Grant Number 00646903.

\vskip 1pc

\normalsize

\noindent
Department of Mathematics, Tokyo Institute of Technology
2-12-1
Ookayama, Meguro-ku Tokyo 152-8551 Japan

\end{document}